\newcommand{\nr}{{|\!|}}
\newcommand{\0}{{L^2(\mathbb{R}^N)}}
\newcommand{\s}{H^s(\mathbb{R}^N)}
\newcommand{\stcn}{\xrightarrow[n\to +\infty]{}}
\newcommand{\stck}{\xrightarrow[k\to +\infty]{}}
\newcommand{\stcr}{\xrightarrow[r\to +\infty]{}}
\newcommand{\rij}{{r_{ij}}}
\newcommand{\pij}{{p_{ij}}}
\newcommand{\p}{{\phi}}
\newcommand{\ps}{{\psi}}
\newcommand{\R}{{\mathbb{R}^N}}
\newcommand{\const}{\eta }
\newtheorem{theorem}{Theorem}[section]
\newtheorem{proposition}[theorem]{Proposition}
\newtheorem{lemma}[theorem]{Lemma}
\newtheorem{dfn}[theorem]{Definition}
\begin{document}

\begin{frontmatter}
\title{Orbital stability of standing waves of a class of fractional Schr\"odinger equations with a general Hartree-type
integrand}
 \author[Cho]{Y. Cho}
 \address[Cho]{Department of Mathematics, and Institute of Pure and Applied Mathematics, Chonbuk National University, Jeonju 561-756, South Korea.}
\author[Fall]{M.M. Fall}
\address[Fall]{African Institute for Mathematical Sciences of Senegal, AIMS-Senegal,
KM 2, Route de Joal, B.P. 14 18. Mbour, S\'en\'egal.}
 \author[Hajaiej]{H. Hajaiej}
 \address[Hajaiej]{Department of Mathematics, College of Science, King Saud University
P.O. Box 2455, Riyadh 11451, Saudi Arabia. }
 \author[Kaust]{P.A. Markowich}
 \address[Kaust]{Division of Math \& Computer Sc \& Eng, King Abdullah University of Science and Technology
Thuwal 23955-6900, Saudi Arabia.}
\author[Kaust]{S. Trabelsi}
\begin{abstract}
This article is concerned with the mathematical analysis of a class
of a nonlinear fractional Schr\"odinger equations with a general
Hartree-type integrand. We prove existence and uniqueness of
global-in-time solutions to the associated Cauchy problem. Under
suitable assumptions, we also prove the existence of standing waves
using the method of concentration-compactness by studying the associated constrained minimization
problem. Finally we show the orbital stability of standing waves
which are the minimizers of the associate variational problem.
\end{abstract}
\begin{keyword}
Fractional Schr\"odinger equation, Hartree type nonlinearity, standing waves, orbital stability
\end{keyword}
\end{frontmatter}

\section{Introduction}
A partial differential equation is called fractional
when it involves derivatives or integrals of fractional order.
 Various physical phenomena and applications require the use of
 fractional derivatives, for instance quantum mechanics, pseudo-chaotic
 dynamics, dynamics in porous media, kinetic theories of systems with chaotic
  dynamics. The latter application is based on the so called fractional Schr\"odinger
   equation. This equation was derived using the path integral over a kind of L\'evy
   quantum mechanical path approach  by Laskin in Ref.
   \cite{Laskin1,Laskin2,Laskin3}. The mathematical analysis of the fractional
   nonlinear Schr\"odinger equation has been growing continually during the last
   few decades. Many results have been obtained and we refer for instance to
   \cite{chho}
   and references therein.
 \vskip6pt This paper deals with the analysis of the following  Cauchy problem
\begin{align*}\mathscr{S}:\quad\left\{
\begin{array}{l}\label{Cauchy}
i\partial_t \phi +(-\Delta)^s \phi = \left(G(|\phi|) \star
V(|x|)\right)G'(\phi),\\ \\
\phi(t=0,x)=\phi_0.
\end{array}
\right.\end{align*} In the system $\mathscr{S}$, $\phi(t,x)$ is a
complex-valued function on $\mathbb{R}\times \R$ and  $\phi_0$ is a
prescribed initial data in $\s$. The operator $(-\Delta)^s$ denotes
the fractional Laplacian of power $0<s<1$. It is defined as a
pseudo-differential operator
$\mathcal{F}[(-\Delta)^s\,\phi](\xi)=|\xi|^{2s}\,\mathcal{F}[\phi](\xi)$
with $\mathcal{F}$ being the Fourier transform. The symbol $\star$
denotes the convolution operator in $\R$ with the potential
$V(|x|)=|x|^{\beta-N}$ where $\beta>0$ is such that $\beta >N-2s$. The
function $G$ is a differentiable function from
$\mathbb{R}^+\rightarrow \mathbb{R}^+$, $G'(\phi):=\frac{d G}{d
\phi}:= F(|\phi|)\phi$, where $F : \mathbb{R}
\rightarrow \mathbb{R}$.
\vskip6pt
The above Cauchy problem reduces to the massless boson Schr\"odinger
equation in three dimensions when $G(\phi) = |\phi|^2$, $V(|x|)
=|x|^{-1}$ and $s = \frac{1}{2}$. In this case, standing waves of
the system $\mathscr{S}$, i.e. solutions of the form $\phi(t,x) =
u(x)e^{-i\kappa t}$, satisfy the following semilinear partial differential
equation
\begin{equation}\label{(B)}
(-\Delta)^{1/2}u-(|x|^{-1} \ast\;u^2)u + \kappa u = 0.
\end{equation}
The associated variational problem
\begin{align}
\mathcal{I}_\lambda &= \inf\left\{\nr|\xi|^\frac12 \mathcal{F}[{u}](\xi)\|^2_{L^2(\R)} - \int_{\mathbb{R}^N\times \R}\frac{|u(x)|^2|u(y)|^2}{|x-y|}dxdy, \right.\nonumber\\
&\hskip150pt\left.u\in H^{\frac12}(\R),\:\int_{\R} |u(x)|^2\,dx=\lambda \right\rbrace, \label{(M)}
\end{align}
has played a fundamental role
in the mathematical theory of gravitational collapse of boson stars,
\cite{Lieb15}. In Ref. \cite{hajaiej13}, the authors studied the associated variational
problem
\begin{align}
\mathcal{I}^G_\lambda &= \inf\left\{\nr|\xi|^s \mathcal{F}[{u}](\xi)\|^2_{L^2(\R)} -  \int_{\mathbb{R}^N\times \R}
G(u(x))V(|x-y|)G(u(y))dxdy, \right.\nonumber\\
&\hskip170pt\left.u\in H^{s}(\R),\:\int_{\R} |u(x)|^2\,dx=\lambda \right\rbrace, \label{(V)}
\end{align}
 for a
general nonlinearity $G$, a kernel $V(|x|) = |x|^{\beta-N}$ and
dimension $N$, where here and the following
$$
\s:=\{u\in L^2(\R^N),\:\,\nr|\xi|^s \mathcal{F}[{u}](\xi)\|^2_{L^2(\R)}<\infty \}.
$$
 In the critical case $2s = {N-\beta}{}$, they
were able to extend the results of \cite{Lieb15}. Moreover, in the
subcritical $2s > {N-\beta}$, they have also proved the
existence and symmetry of all
minimizers of \eqref{(V)} by using rearrangement techniques. More
precisely, they showed that under suitable assumptions on $G$, one
can always take a radial and radial by decreasing minimizing
sequence of  problem \eqref{(V)}.
\vskip6pt
Another very important issue related to the nonlinear fractional
Schr\"odinger equation $\mathscr{S}$ is the orbital stability of
standing waves. For such an issue, it is essential to show that all the
minimizing sequences are relatively compact in $H^s(\mathbb{R}^N)$.
This is the gist of the breakthrough paper \cite{Lions1}. The line of attach consists of:
\begin{enumerate}
\item Prove the uniqueness of the solutions of $\mathscr{S}$.
\item Prove the conservation of energy and mass of the solutions.
\item Prove the relative compactness of all minimizing sequences of
the problem \eqref{(V)}.
\end{enumerate}
 \vskip6pt Our first result concerns the well-posedness of
the system $\mathscr{S}$. Before  stating it, we need to fix some conditions on $G$. We  assume that $G$
is nonnegative and differentiable such that $G(0)=0$ and for all
$\psi\in\mathbb{R}_+$
\[
\mathcal{A}_0:\exists \,\mu \in \left[\left.2,1+\frac{2s+\beta}{N}
\right)\right.\:\text{s.t.}\:\quad \left\lbrace\begin{array}{ll}&G(\psi)
\leq\const(|\psi|^2 + |\psi|^\mu),\\& \\& |G'(\psi)| \leq\const(|\psi| +
|\psi|^{\mu-1}).\end{array}\right.
\]
We have obtained the following
\begin{theorem}\label{thm1}
Let $N \geq 1,0 < s < 1, 0 < \beta < N, N-2s \le \beta, \phi_0\in H^s(\R)$
and $G$ such that $\mathcal{A}_0$ holds true. Then, there exists a
weak global-in-time solution $\phi(t,x)$ to the system $\mathscr{S}$ such that
\[\phi\in L^\infty(\mathbb{R}\,;\, H^s(\R)) \cap W^{1,\infty}(\mathbb{R}\,;\,
 H^{-s}(\R)).\]
Moreover, if $N = 1$ and $\frac12  < s <1$ or if $N \ge 3$, $\frac{N}{2(N-1)} < s < 1$, $N-s+\frac12 < \beta <
\min(N, \frac{3N}{2}-s-\frac{N}{4s})$ and $\mu$ (in $\mathcal{A}_0$) is such that
\[
\max\left(2, 1 + \frac{2\beta-N}{N-2s}\right) < \mu < 2+
 \frac{N}{N-2s} \frac{2s-1-2N+2\beta}{2s-1+N},
\]
then the solution is unique.
\end{theorem}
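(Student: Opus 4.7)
The plan is to treat existence and uniqueness separately, since the two conclusions require very different technical inputs.

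\textbf{Existence.} I would first set up an approximation scheme (either a Galerkin truncation in Fourier frequency, or a regularization/cut-off of the Hartree nonlinearity) so that the approximating system enjoys local-in-time well-posedness in $\s$ by a straightforward contraction argument. The core task is to extract uniform a priori bounds, which come from the two formal conservation laws
\[
M(\phi) = \|\phi(t)\|_{L^2}^2, \qquad E(\phi) = \bigl\||\xi|^s \mathcal{F}[\phi]\bigr\|_{L^2}^2 - \iint G(|\phi(x)|)\, V(|x-y|)\, G(|\phi(y)|)\, dx\, dy.
\]
Using assumption $\mathcal{A}_0$ together with the Hardy--Littlewood--Sobolev inequality applied to the convolution with $|x|^{\beta-N}$, and Sobolev embedding of $\s$ into the relevant $L^q$ spaces, I would control the double integral by $C(\|\phi\|_{L^2}^4 + \|\phi\|_{\s}^\theta)$ with some $\theta < 2$; this is exactly where the subcritical restriction $\mu < 1 + (2s+\beta)/N$ in $\mathcal{A}_0$ is used. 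Combining with mass conservation then gives a uniform $L^\infty_t \s$ bound, and the equation itself yields $\partial_t \phi \in L^\infty_t H^{-s}$. A standard Aubin--Lions compactness argument allows passage to the limit in the approximation and produces a weak global solution in the claimed space.

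\textbf{Uniqueness.} For two solutions $\phi_1, \phi_2$ sharing the initial datum, I would work with the Duhamel formulation involving the fractional Schr\"odinger group $U(t) = e^{it(-\Delta)^s}$ and estimate the difference $w = \phi_1 - \phi_2$ in a well-chosen Strichartz space $L^p_t L^q_x$. The hypotheses $\tfrac12 < s < 1$ in $N=1$ or $\tfrac{N}{2(N-1)} < s < 1$ for $N\ge 3$ correspond to the regime where nontrivial admissible pairs are available for the dispersive estimate of $U(t)$. Writing
\[
\bigl(G(|\phi_1|)\star V\bigr) G'(\phi_1) - \bigl(G(|\phi_2|)\star V\bigr) G'(\phi_2)
= \bigl(G(|\phi_1|)\star V\bigr)\bigl(G'(\phi_1)-G'(\phi_2)\bigr) + \bigl((G(|\phi_1|)-G(|\phi_2|))\star V\bigr) G'(\phi_2),
\]
and applying Hardy--Littlewood--Sobolev to each convolution with $|x|^{\beta-N}$ followed by H\"older and Sobolev embedding on the pointwise factors (which, by $\mathcal{A}_0$, are locally Lipschitz with growth controlled by $|\phi_1|+|\phi_2|+|\phi_1|^{\mu-1}+|\phi_2|^{\mu-1}$), produces a contraction-type inequality for $w$. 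The precise ranges
\[
\max\!\bigl(2, 1 + \tfrac{2\beta-N}{N-2s}\bigr) < \mu < 2 + \tfrac{N(2s-1-2N+2\beta)}{(N-2s)(2s-1+N)}, \qquad N-s+\tfrac12 < \beta < \min\!\bigl(N,\tfrac{3N}{2}-s-\tfrac{N}{4s}\bigr),
\]
are exactly those forced by simultaneous admissibility of the Strichartz pair, the HLS exponent and the Sobolev embedding. A Gr\"onwall argument then concludes $w \equiv 0$.

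\textbf{Main obstacle.} The hard part is unquestionably the uniqueness. For $s\ne 1$ the group $U(t)$ has weaker dispersion than the usual Schr\"odinger group: the set of admissible Strichartz pairs is narrower (indeed $N=2$ is excluded in the theorem, reflecting a known degeneracy of the fractional dispersive estimate), and one must balance against the long-range nature of the Hartree convolution $|x|^{\beta-N}$ and the two-term power growth in $\mathcal{A}_0$. Carrying out this balance, and showing that the resulting system of constraints on $(\mu,\beta,s,N)$ is non-empty and produces the thresholds stated in the theorem, is the most delicate ingredient. All other ingredients---local existence, conservation laws, and passage to the limit---are essentially routine once the a priori $\s$ bound is in hand.
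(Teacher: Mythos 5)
Your existence and global-in-time part follows essentially the paper's route (local construction by contraction using the Lipschitz character of the Hartree nonlinearity, mass/energy bounds via Hardy--Littlewood--Sobolev and fractional Gagliardo--Nirenberg with a subcritical exponent $\gamma_{ij}<2$, hence an a priori $H^s$ bound and global extension), and that portion is fine even if the paper passes through Cazenave-style regularization rather than Galerkin plus Aubin--Lions.

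The genuine gap is in the uniqueness argument. You propose to close a contraction for $w=\phi_1-\phi_2$ in a standard Strichartz space $L^p_tL^q_x$ for $U(t)=e^{it(-\Delta)^s}$, asserting that the hypotheses on $s$ mark the regime where ``nontrivial admissible pairs'' exist. But for $0<s<1$ the fractional Schr\"odinger group does \emph{not} satisfy loss-free space-time estimates of the usual form for general (non-radial) data: the dispersive estimate degenerates and every unweighted Strichartz estimate carries a loss of derivatives, which cannot be absorbed when the two solutions are known only to lie in $L^\infty_t H^s$ (the energy space) and the nonlinearity is a Hartree term with singular kernel $|x|^{\beta-N}$ and general growth $\mu$. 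This is precisely why the paper does not use standard Strichartz norms at all: it estimates the Duhamel difference in $L^\infty_tL^2$ together with the weighted mixed radial-angular norm $\nr |x|^{-\delta}w\nr_{L^q_t L^q_\rho L^{\widetilde q}_\sigma}$, using the weighted Strichartz estimate of Lemma \ref{wstr} (valid for $2\le q<4s$, with $\delta=\frac{N+2s}{q}-\frac N2$, and resting on angular smoothing/Sobolev embedding on the sphere), the weighted convolution inequality of Lemma \ref{wconv}, and Hardy--Sobolev inequalities; the inhomogeneous term is measured only in $L^1_tL^2_x$, so no auxiliary space membership of the solutions themselves is needed and the uniqueness is unconditional in $L^\infty_tH^s$. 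The stated ranges of $\mu$ and $\beta$, and the exclusion of $N=2$, come from the compatibility of the exponents in these weighted estimates (in particular the combination \eqref{exp-cond} and the requirement $\frac{N}{2(N-1)}<s<1$, which is empty for $N=2$), not from a standard admissibility count as you suggest. Finally, the concluding step is not a Gr\"onwall argument on an $L^p_t$ norm: the estimate produces a factor $(t_1+t_2)^{1-\frac1q}$, and uniqueness is obtained by smallness of the time interval followed by a maximal-interval continuation argument. Without replacing your unweighted Strichartz framework by estimates of this weighted type (or some equivalent device compensating the derivative loss), the contraction you describe cannot be closed at $H^s$ regularity, so the uniqueness part of the proposal does not go through as written.
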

The particular case  $\mu=2$ and $2s=N-\beta$ was treated in Ref.
\cite{chho} and for lightness of the proofs, we shall sometimes omit it and  focus
on the case $ \mu \in \left(2,1+\frac{2s+\beta}{N}\right)$. The
proof of the existence part of Theorem \ref{thm1} is based on a
classical contraction argument and the conservation laws associated
to the dynamics of the system $\mathscr{S}$.
The uniqueness part for $N = 1$ of Theorem \ref{thm1} readily follows from the
embedding $H^s \hookrightarrow L^\infty$ for all  $s>\frac12$. The part for $N \ge 3$ is obtained using mixed norms to be defined later
and weighted Strichartz and convolution inequalities, which require  $N \ge 3$.
It would be very interesting to find estimates to handle the uniqueness for $N = 2$. Let us mention that in Ref. \cite{GH} the authors showed  the orbital stability  of standing  waves  in the case of power nonlinearities by assuming energy conservation and time continuity without proving uniqueness, which is an  inescapable and quite hard  step, especially in the fractional setting.
\vskip6pt
As mentioned before, if $\phi(t,x)=e^{i\kappa t}u(x)$ with
$\kappa\in\mathbb{R}$ is a solution of the system $\mathscr{S}$,
then it is called a standing wave solution and $u(x)$ solves the
following bifurcation problem
\[\tilde{\mathscr{S}}:\quad(-\Delta)^s u -\kappa u= \left(G(|u|)
\star V(|x|)\right)G'(u).\] In order to study the existence of a
solution $(\kappa,u)$ to the
 stationary equation $\tilde{\mathscr{S}}$, we use a variational
 method based on the following minimization problem
\begin{equation}\label{optimization-problem}
\mathcal{I}_{\lambda}= \inf\left\lbrace \mathcal{E}(u),\quad u
\in H^s(\mathbb{R}^N),\quad \int_{\mathbb{R}^N} |u(x)|^2 \,dx=\lambda \right \rbrace,
\end{equation}
where $\lambda$ is a positive prescribed number and
\begin{eqnarray*}
\mathcal{E}(u)&=&\frac{1}{2} \nr\nabla_s u\nr^2_{L^2(\R)} -
 \frac{1}{2} \int_{\mathbb{R}^N\times\mathbb{R}^N }G(|u(x)|)
 \,V(|x-y|)\,G(|u(y)|)\,dxdy,\\
 &:=& \frac{1}{2} \nr\nabla_s u\nr^2_{L^2(\R)} -\frac{1}{2}
 \,\mathcal{D}(G(|u|),G(|u|)).
\end{eqnarray*}
The kinetic energy is precisely expressed by the formula for all function $u$ in the Schwarz class
\begin{equation}\label{deffrac}
 \|\nabla_s u\|^2_{L^2(\R)} = C_{N,s} \int_{\mathbb{R}^N\times \R}
 \frac{|u(x)-u(y)|^2}{|x-y|^{N+2s}}dxdy,
\end{equation}
with $C_{N,s}$ being a positive normalization constant. In order to prove the existence of critical points to the functional
$\mathcal{E}$ and thereby  solutions to the problem
$\tilde{\mathscr{S}}$, we  will need some extra grows condition on $G$:
 for all $\psi\in \mathbb{R}_+$
\[
\mathcal{A}_1:\quad
\left\lbrace
\begin{array}{l}
\exists 0<\alpha< 1+\frac{2s+\beta}{N}\:\:s.t.\:\: \forall \psi,\:
0<\psi\ll1,\quad G(\psi)\geq \const\,\psi^\alpha, \\ \\
G(\theta \,\psi) \geq \theta^{1+\frac{2s+\beta}{2N}}\,G(\psi).
\end{array}
\right.
\]
Our next main result is contained in the following
\begin{theorem}\label{thm2}
Let $0<s<1,0<\beta<N, N-\beta\leq 2s$ and $G$ such that
$\mathcal{A}_0$ and $\mathcal{A}_1$ hold true. Then, for all
 $\lambda>0$,  problem \eqref{optimization-problem} has a minimizer
 $u_\lambda\in H^s(\R)$ such that $I_\lambda=\mathcal{E}(u_\lambda)$.
\end{theorem}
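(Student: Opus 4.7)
The plan is to apply Lions' concentration-compactness principle to minimizing sequences of the constrained problem \eqref{optimization-problem}. The workflow has four stages: (a) show $-\infty<\mathcal{I}_\lambda<0$ and that any minimizing sequence $\{u_n\}$ is bounded in $\s$; (b) establish the strict subadditivity $\mathcal{I}_{\lambda_1+\lambda_2}<\mathcal{I}_{\lambda_1}+\mathcal{I}_{\lambda_2}$ for all $\lambda_1,\lambda_2>0$; (c) run the trichotomy on $\rho_n=|u_n|^2$ and exclude vanishing and dichotomy; (d) pass to the limit in the compact case to identify the minimizer $u_\lambda$.

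For (a), the upper bound in $\mathcal{A}_0$, the Hardy--Littlewood--Sobolev inequality, and the fractional Gagliardo--Nirenberg interpolation majorise $\mathcal{D}(G(|u|),G(|u|))$ by $C(\lambda)\bigl(1+\|\nabla_s u\|_{L^2(\R)}^{2\theta}\bigr)$ for some $\theta<1$; that $\theta<1$ is precisely the mass-subcritical restriction $\mu<1+(2s+\beta)/N$ in $\mathcal{A}_0$. Inserting into $\mathcal{E}$ yields simultaneously $\mathcal{I}_\lambda>-\infty$ and a uniform $\s$-bound on $\{u_n\}$. Negativity follows by testing with a mass-$\lambda$ bump $u$ and rescaling $u_\sigma(x):=\sigma^{N/2}u(\sigma x)$, which preserves the $L^2$-norm, scales the kinetic term like $\sigma^{2s}$, and, via the pointwise bound $G(\psi)\geq\const\psi^\alpha$ from $\mathcal{A}_1$ (applicable on the support of $u_\sigma$ for $\sigma$ small), bounds $\mathcal{D}(G(|u_\sigma|),G(|u_\sigma|))$ below by $\const^2\sigma^{N\alpha-N-\beta}\,\mathcal{D}(|u|^\alpha,|u|^\alpha)$. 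Since $\alpha<1+(2s+\beta)/N$ is exactly $N\alpha-N-\beta<2s$, the potential dominates as $\sigma\to0^+$ and $\mathcal{E}(u_\sigma)<0$ for $\sigma$ sufficiently small.

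The crux is step (b). The homogeneity hypothesis $G(\theta\psi)\geq\theta^{1+(2s+\beta)/(2N)}G(\psi)$ from $\mathcal{A}_1$, applied to $v:=\sqrt{\theta}\,u$ with $\theta>1$, gives $\|v\|_{L^2(\R)}^2=\theta\lambda$ together with
\[
\mathcal{E}(v)\leq\tfrac{\theta}{2}\|\nabla_s u\|_{L^2(\R)}^2-\tfrac{\theta^{1+(2s+\beta)/(2N)}}{2}\,\mathcal{D}(G(|u|),G(|u|)).
\]
Taking the infimum over $u$ of mass $\lambda$ and using $\mathcal{I}_\lambda<0$ proved in (a) gives $\mathcal{I}_{\theta\lambda}\leq\theta^{1+(2s+\beta)/(2N)}\mathcal{I}_\lambda<\theta\,\mathcal{I}_\lambda$ for every $\theta>1$. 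Equivalently, $t\mapsto\mathcal{I}_t/t$ is strictly decreasing on $(0,\infty)$, which yields the required strict subadditivity by the standard argument.

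For (c), apply Lions' lemma to $\rho_n=|u_n|^2$. Vanishing is excluded because it would force $u_n\to 0$ in $L^q(\R)$ for every $q\in(2,2N/(N-2s))$, and thus, via $\mathcal{A}_0$ and HLS, $\mathcal{D}(G(|u_n|),G(|u_n|))\to 0$, contradicting $\mathcal{E}(u_n)\to\mathcal{I}_\lambda<0$. Dichotomy is ruled out by the standard cutoff splitting $u_n=u_n^{(1)}+u_n^{(2)}+w_n$ with supports separating at infinity: the decay of $V(|x|)=|x|^{\beta-N}$ kills the cross-interaction, and the growth bounds in $\mathcal{A}_0$ control the error from composing $G$ with a sum, producing $\mathcal{I}_\lambda\geq\mathcal{I}_\alpha+\mathcal{I}_{\lambda-\alpha}$ in contradiction with (b). Compactness then provides $y_n\in\R$ with $u_n(\cdot+y_n)\to u_\lambda$ strongly in $L^2(\R)$, weakly in $\s$, and strongly in $L^q$ for every subcritical $q$; lower semicontinuity of the kinetic part together with continuity of the nonlocal part (again via $\mathcal{A}_0$ and HLS) yields $\mathcal{E}(u_\lambda)\leq\liminf_n\mathcal{E}(u_n)=\mathcal{I}_\lambda$ with $\|u_\lambda\|_{L^2(\R)}^2=\lambda$, so $u_\lambda$ is the desired minimizer. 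The main technical obstacle I anticipate is the dichotomy cutoff: since $G$ is not homogeneous, estimating the mismatch between $G(|u_n^{(1)}+u_n^{(2)}|)$ and $G(|u_n^{(1)}|)+G(|u_n^{(2)}|)$ on the intermediate annular region requires a careful interplay of the two-sided bounds in $\mathcal{A}_0$, the Kato-type decay of $V$, and the asymptotic separation of supports furnished by Lions' construction.
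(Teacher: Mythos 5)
Your proposal is correct and follows essentially the same concentration-compactness strategy as the paper: coercivity and strict negativity of $\mathcal{I}_\lambda$ from $\mathcal{A}_0$, Hardy--Littlewood--Sobolev and fractional Gagliardo--Nirenberg estimates, strict subadditivity from the scaling condition in $\mathcal{A}_1$, exclusion of vanishing and dichotomy, and passage to the limit with lower semicontinuity of the kinetic term and continuity of the nonlocal term. The only deviations --- the pure amplitude scaling $\sqrt{\theta}\,u$ giving $\mathcal{I}_{\theta\lambda}\le\theta^{1+\frac{2s+\beta}{2N}}\mathcal{I}_\lambda$ and monotonicity of $t\mapsto\mathcal{I}_t/t$, instead of the paper's combined amplitude--dilation scaling, and the use of Lions' $L^q$-vanishing lemma instead of the paper's covering argument --- are minor variants of the same steps and, if anything, streamline them.
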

In fact we will show that  any minimizing sequence of  problem
\ref{optimization-problem} is --up to suitable translations-- relatively
compact in $H^s(\R)$. The proof of Theorem \ref{thm2} is based on
the concentration-compactness
 method of P-L. Lions \cite{Lions}.
\vskip6pt The last part of the paper deals with the stability of the
standing waves. For that purpose, we introduce the
 following problem
\begin{equation*}
\hat{\mathcal{I}}_{\lambda}= \inf\left\lbrace \mathcal{J}(z),\quad
z \in H^s(\mathbb{R}^N),\quad \int_{\mathbb{R}^N} |z|^2 \,dx=\lambda \right \rbrace,
\end{equation*}
where $z=u+i\,v$ and
\begin{eqnarray*}
\mathcal{J}(z) &=&\frac{1}{2} \nr\nabla_s z\nr^2_{L^2(\R)} -\frac{1}{2}\,
\mathcal{D}(G(|z(x)|),G(|z(x)|)),
\\&=&\frac{1}{2} \nr\nabla_s u\nr^2_{L^2(\R)}+ \frac{1}{2} \nr\nabla_s v\nr^2_{L^2(\R)} -\frac{1}{2}\,\mathcal{D}(G((u^2+v^2)^\frac12),G((u^2+v^2)^\frac12)),\\
\\&:=&\mathcal{J}(u,v).
\end{eqnarray*}
We have obviously $\mathcal{E}(u)=\mathcal{J}(u,0)$. Following Ref. \cite{Lions1}, we introduce the following set
\[\hat{\mathcal{O}}_\lambda=\left\lbrace z \in H^s(\mathbb{R}^N),\quad
 \int_{\mathbb{R}^N} |z|^2 \,dx=\lambda\:\::\:\: \mathcal{J}(z)
  =\hat{\mathcal{I}}_{\lambda}\right\rbrace. \]
The set $\hat{\mathcal{O}}_\lambda$ is the so called orbit of the standing waves of $\mathscr{S}$ with mass
  $\sqrt\lambda$. We define the stability of $\hat{\mathcal{O}}_\lambda$ as follows
\begin{dfn}\label{defstab}
Let $\phi_0\in H^s(\R)$ be an initial data and $\phi(t,x) \in H^s(\R) $ the associated solution of problem $\mathscr{S}$. We say that $\hat{\mathcal{O}}_\lambda$ is $H^s(\R)-$stable with respect to the system $\mathscr S$ if
\begin{itemize}
\item $\hat{\mathcal{O}}_\lambda\neq \varnothing$.
\item For all $ \varepsilon > 0$, there exists $\delta>0$ such
 that for any $\phi_0\in H^s(\R)$ satisfying $\inf_{z \in
\hat{\mathcal O}_\lambda} |\phi_0-z|<\delta$, we have $\inf_{z \in
\hat{\mathcal O}_\lambda}|\phi(t,x)-z|<\epsilon$ for all $t\in \mathbb R$.
\end{itemize}
\end{dfn}
The notion of stability depends then intimately on the well-posedness
 of the Cauchy problem $\mathscr{S}$ and the existence of standing waves. Therefore,   having in hand  Theorems
 \ref{thm1} and \ref{thm2}, we prove the following 
\begin{theorem}\label{thm3}
Let $N \ge 3$, $\frac{N}{2(N-1)} < s < 1$, $N-s+\frac12 < \beta <
\min(N, \frac{3N}{2}-s-\frac{N}{4s})$ and let $G$ satisfying
$\mathcal{A}_0$ and $\mathcal{A}_1$ with $\mu$ (in $\mathcal{A}_0$)
such that
\[
\max\left(2, 1 + \frac{2\beta-N}{N-2s}\right) < \mu < 2+ \frac{N}{N-2s}
 \,\frac{2s-1-2N+2\beta}{2s-1+N}.
\]
Let $\phi_0\in H^s(\R)$ and $\phi(t,x)\in H^s(\R)$ the
associated solution to the problem $\mathscr S$.
Then $\hat{\mathcal{O}}_\lambda$ is $H^s(\R)-$stable with respect
to the system $\mathscr S$.
\end{theorem}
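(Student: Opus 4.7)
My plan is to follow the classical Cazenave--Lions scheme~\cite{Lions1}: combine the global well-posedness and conservation laws from Theorem \ref{thm1} with a compactness property for minimizing sequences, extending the argument of Theorem \ref{thm2} to complex-valued functions, and then argue by contradiction.

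First I would verify that $\hat{\mathcal{O}}_\lambda\neq\varnothing$ by relating $\hat{\mathcal{I}}_\lambda$ to the real problem \eqref{optimization-problem}. The pointwise bound $||z(x)|-|z(y)||\le|z(x)-z(y)|$ combined with the singular integral representation \eqref{deffrac} yields the diamagnetic-type inequality
\[
\|\nabla_s|z|\|_{L^2(\R)}^2\le\|\nabla_s u\|_{L^2(\R)}^2+\|\nabla_s v\|_{L^2(\R)}^2,\qquad z=u+iv\in H^s(\R).
\]
Since the interaction term depends only on $|z|$, this gives $\mathcal{J}(z)\ge\mathcal{E}(|z|)$, whence $\hat{\mathcal{I}}_\lambda=\mathcal{I}_\lambda$ and every real minimizer produced by Theorem \ref{thm2} belongs to $\hat{\mathcal{O}}_\lambda$. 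Moreover, the concentration-compactness analysis of Theorem \ref{thm2} transfers almost verbatim to the complex setting: applied to $\rho_n:=|z_n|^2$ with strict subadditivity $\hat{\mathcal{I}}_{\lambda_1+\lambda_2}<\hat{\mathcal{I}}_{\lambda_1}+\hat{\mathcal{I}}_{\lambda_2}$ (deduced from the scaling bound in $\mathcal{A}_1$), it rules out vanishing and dichotomy, so every minimizing sequence for $\hat{\mathcal{I}}_\lambda$ is relatively compact in $H^s(\R)$ up to translations.

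Next, the hypotheses on $(N,s,\beta,\mu)$ are exactly those of the uniqueness part of Theorem \ref{thm1}, so each $\phi_0\in H^s(\R)$ produces a \emph{unique} global solution $\phi\in L^\infty(\mathbb{R};H^s(\R))\cap W^{1,\infty}(\mathbb{R};H^{-s}(\R))$, with conservation of mass and energy, $\|\phi(t)\|_{L^2}=\|\phi_0\|_{L^2}$ and $\mathcal{J}(\phi(t))=\mathcal{J}(\phi_0)$ for all $t$. Suppose, for contradiction, that $\hat{\mathcal{O}}_\lambda$ is not $H^s$-stable: then there exist $\varepsilon_0>0$, initial data $\phi_0^n\in H^s(\R)$ with $\mathrm{dist}_{H^s}(\phi_0^n,\hat{\mathcal{O}}_\lambda)\to 0$, and times $t_n\in\mathbb{R}$ with
\[
\inf_{z\in\hat{\mathcal{O}}_\lambda}\|\phi^n(t_n,\cdot)-z\|_{H^s(\R)}\ge\varepsilon_0.
\]
Pick $w_n\in\hat{\mathcal{O}}_\lambda$ with $\|\phi_0^n-w_n\|_{H^s}\to 0$; continuity of $\mathcal{J}$ on $H^s(\R)$ together with the two conservation laws gives $\|\phi^n(t_n)\|_{L^2}^2\to\lambda$ and $\mathcal{J}(\phi^n(t_n))\to\hat{\mathcal{I}}_\lambda$. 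After rescaling by $\sqrt{\lambda}/\|\phi^n(t_n)\|_{L^2}\to 1$, the sequence $\psi_n:=\phi^n(t_n,\cdot)$ is a bona fide minimizing sequence for $\hat{\mathcal{I}}_\lambda$.

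Invoking the compactness from the first paragraph, there exist $y_n\in\R$ and $z_\star\in\hat{\mathcal{O}}_\lambda$ such that, along a subsequence, $\psi_n(\cdot+y_n)\to z_\star$ strongly in $H^s(\R)$. Translation invariance of $\mathcal{J}$ and of the mass constraint gives $z_\star(\cdot-y_n)\in\hat{\mathcal{O}}_\lambda$, and then $\|\psi_n-z_\star(\cdot-y_n)\|_{H^s}\to 0$ contradicts the lower bound $\varepsilon_0$. The main obstacle in executing this plan is the promotion of the concentration-compactness machinery of Theorem \ref{thm2} to complex-valued sequences while retaining strict subadditivity; this is exactly where the scaling assumption $\mathcal{A}_1$ and the diamagnetic inequality cooperate. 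A secondary technical point, used tacitly above, is to upgrade the weak $H^s$-solution produced by Theorem \ref{thm1} to a genuine conserver of $\mathcal{J}$, which follows from the uniqueness statement combined with a standard mollification of the Hartree nonlinearity.
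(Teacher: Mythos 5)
Your overall scheme is the same as the paper's: argue by contradiction, use uniqueness/global well-posedness and the conservation of mass and energy from Theorem \ref{thm1} to turn the putative unstable sequence $\phi^n(t_n,\cdot)$ into a minimizing sequence for $\hat{\mathcal I}_\lambda$ (with masses only tending to $\lambda$), and reduce stability to the relative compactness, up to translations, of all such sequences, together with $\hat{\mathcal O}_\lambda\neq\varnothing$. Where you genuinely diverge is in how that compactness is obtained. You propose to re-run the concentration-compactness machinery of Section 3 directly on complex-valued minimizing sequences (vanishing, strict subadditivity via $\mathcal A_1$, dichotomy), and this does work — nothing in Propositions \ref{propc1}, \ref{novanishing}, \ref{cont-sub}, \ref{nodichotomy} uses realness, since $\mathcal D$ depends only on $|z|$ and the kinetic term splits over real and imaginary parts — but it is exactly the work you flag as the "main obstacle" and leave unexecuted. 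The paper instead reuses Theorem \ref{thm2} as a black box: by the diamagnetic-type inequality \eqref{eqq4} the moduli $|w_n|$ form a minimizing sequence for the \emph{real} problem at masses $\lambda_n\to\lambda$, so by Theorem \ref{thm2} and the continuity of $\lambda\mapsto\mathcal I_\lambda$ (Proposition \ref{cont-sub}) one gets $|w_n|\to\varphi$ strongly in $H^s$ with $\mathcal E(\varphi)=\mathcal I_\lambda=\hat{\mathcal I}_\lambda$; the price is the extra step your route bypasses, namely upgrading convergence of the moduli to convergence of $w_n$ itself, which the paper does by identifying $\varphi=|w|$ through local $L^2$ limits and then exploiting the asymptotic equality case \eqref{eqq9} in \eqref{eqq4} together with weak lower semicontinuity to force $\|\nabla_s w_n\|_2\to\|\nabla_s w\|_2$, hence strong $H^s$ convergence. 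So your approach buys a cleaner final statement (compactness of complex minimizing sequences, from which nonemptiness and the contradiction follow immediately) at the cost of repeating the variational analysis in the complex setting, while the paper's buys economy of means at the cost of the modulus-to-function upgrade; both are correct, but to make your sketch a proof you must actually carry out the complex concentration-compactness argument (or justify the "verbatim transfer" estimate by estimate), since that assertion is currently the load-bearing and unproved step.
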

\vskip6pt
The paper is divided into three sections. The first one is dedicated
to the analysis of the dynamics of the system $\mathscr{S}$. More precisely,
in this section we prove Theorem \ref{thm1}. First of all, we prove a local-in-time
existence of solutions. Second we show that under extra assumptions, this solution
is actually unique. Eventually, we use the conservation laws to show
the global-in-time well-posedness. The second section is devoted to the proof
of existence of solution to the problem $\tilde{\mathscr S}$. For that purpose,
 we use the classical concentration compactness method \cite{Lions} to prove
 Theorem \ref{thm2}. The last section is dedicated to the proof of stability
 of standing waves, namely Theorem \ref{thm3}. Here, we use ideas
 and techniques developed in \cite{hajaiej14}.
\vskip6pt From this point onward, $\eta$ will denote variant
universal constants that may change from line to line of
inequalities. When $\eta$ depends on some parameter, we will write
$\eta(\cdot)$ instead of $\eta$. In order to lighten the notation
and the calculation, we shall use $L^p$ and $H^s$ instead of
$L^p(\R)$ and $H^s(\R)$ respectively for real or complex valued
functions. Also, we shall use $\nr\cdot\nr_p$ instead of
$\nr\cdot\nr_{L^p(\R)}$
 for all $p\in [1,\infty]$. The exponent $p'$ will denotes the conjugate
 exponent of $p$, that is $\frac1p+\frac{1}{p'}=1$. For a more
 detailed account about the Sobolev spaces $H^s$, we refer
 the reader to any textbook of functional analysis (see \cite{caz} for instance).

\section{Well-posedness of the system $\mathscr S$}
In this section we consider the local and global well-posedness of
the problem $\mathscr{S}$ and prove Theorem \ref{thm1}. Let us
denote the nonlinear term $[V(|x|)\star G(\phi)] G'(\phi)$ by
$\mathcal N(\phi)$. Since the well-posedness of the case $\mu = 2,
2s = N-\beta$ was treated in \cite{chho}, in this paper we consider
the initial value problem   $\mathscr{S}$ with $\mu \in \left(2, 1 +
\frac{2s+\beta}{N}\right)$. Let $g = G'$, that is,
$\int_0^{|z|}g(\alpha)\,d\alpha = G(z)$, and assume that $g(z) =
\frac{z}{|z|}g(|z|), z \neq 0$, $G(z) \ge 0$. Then, with
$\mathcal{A}_0$, the function $g$ satisfies obviously
\begin{align}\label{cond-g}
|g(z)| + |g'(z)z| \leq C(|z| + |z|^{\mu-1})\;\; \mbox{for all}\;\; z \in \mathbb C.
\end{align}
\subsection{Weak solutions}
We first show existence of weak solutions to $\mathscr S$ in $H^s$.
For this purpose we prove that $\mathcal N$ is Lipschitz map from
$L^{p'}$ to $L^r$ for some $p, r \in \left.\left[2,
\frac{2N}{N-2s}\right)\right.$. Then the rest of the proof is quite straightforward from the Lipschitz map and
well-known regularizing arguments and we refer the readers to the book \cite{caz}.
\begin{proposition}\label{ws1}
Let $N \ge 2$, $0 < s < 1$, $0 < \beta < N$ and $2s \ge N - \beta$. If $g$ satisfies \eqref{cond-g} with $\mu \in \left(2, 1 + \frac{2s+\beta}{N}\right)$. Then there exists a weak solution $\phi$ such that
\begin{align*} &\phi \in L^\infty(-T_{min}, T_{max} ; H^s) \cap W^{1, \infty}(-T_{min}, T_{max} ; H^{-s}),\\ &\\
&\nr\phi(t)\nr_{2} = \nr\phi_0\nr_2,\;\;\mathcal J(\phi(t)) \le \mathcal J(\phi_0).
\end{align*}
 for all $t \in (-T_{min}, T_{max})$, where $(-T_{min}, T_{max})$ is the maximal existence time interval of $\phi$ for given initial data $\phi_0$.
\end{proposition}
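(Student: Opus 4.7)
The strategy I would follow is the classical regularization-compactness scheme of Ginibre-Velo, as presented in Cazenave's book \cite{caz}. It proceeds in four steps: (i) derive a Lipschitz-type estimate for the nonlinear map $\mathcal{N}$ between suitable Lebesgue spaces; (ii) regularize the nonlinearity so as to obtain globally well-posed approximate problems; (iii) extract uniform $H^s$ bounds from the formal conservation laws; (iv) pass to the limit via weak-$*$ compactness combined with an Aubin-Lions type argument.

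The core analytic step is the nonlinear estimate for $\mathcal{N}(\phi) = (V \star G(|\phi|))\,g(\phi)$ with $V(x)=|x|^{\beta-N}$. I would combine the Hardy-Littlewood-Sobolev (HLS) inequality, which bounds $\nr V\star G(|\phi|)\nr_{q} \lesssim \nr G(|\phi|)\nr_{p}$ for $\frac{1}{q}=\frac{1}{p}-\frac{\beta}{N}$, with the growth estimate $|G(\psi)|+|g(\psi)\psi|\le \const(|\psi|^2+|\psi|^\mu)$ supplied by $\mathcal{A}_0$, Hölder's inequality, and the Sobolev embedding $\s \hookrightarrow L^q$ for $q\in \left[2,\frac{2N}{N-2s}\right)$. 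A careful bookkeeping of the exponents shows that the subcriticality hypothesis $\mu<1+\frac{2s+\beta}{N}$ is exactly what places all the relevant Lebesgue exponents inside the admissible Sobolev range, while $2s\ge N-\beta$ guarantees admissibility in HLS. The outcome is that there exist $p,r\in\left[2,\frac{2N}{N-2s}\right)$ and $\gamma\ge 1$ with
\[
\nr \mathcal{N}(\phi) - \mathcal{N}(\psi)\nr_{r} \;\le\; C\bigl(\nr\phi\nr_{\s}+\nr\psi\nr_{\s}\bigr)^{\gamma}\,\nr\phi-\psi\nr_{p'},
\]
and in particular $\mathcal{N}\colon \s\to H^{-s}$ is locally Lipschitz.

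With this estimate in hand, I would regularize by truncating $g$ to a bounded map $g_n$ so that the corresponding $\mathcal{N}_n$ becomes globally Lipschitz on $\s$, while preserving its Hamiltonian structure (writing $\mathcal{N}_n=G_n'$ for some smooth non-negative $G_n$). For each $n$, Duhamel's formula with the unitary group $e^{it(-\Delta)^s}$ together with a Picard iteration in $C([-T,T];\s)$ produces global-in-time solutions $\phi_n$ that, via standard multiplier arguments, satisfy the conservation of mass $\nr\phi_n(t)\nr_{2}=\nr\phi_0\nr_{2}$ and of the regularized energy $\mathcal{J}_n(\phi_n(t))=\mathcal{J}_n(\phi_0)$. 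The subcriticality $\mu<1+\frac{2s+\beta}{N}$ combined with HLS then yields a Gagliardo-Nirenberg-type control of $\mathcal{D}(G_n(|\phi_n|),G_n(|\phi_n|))$ by a strictly sub-quadratic power of $\nr\nabla_s\phi_n\nr_{2}$, producing a uniform bound $\nr\phi_n\nr_{L^\infty_t \s}\le C(\phi_0)$ independent of $n$ and of $t$.

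To conclude, a diagonal extraction gives $\phi_n \rightharpoonup \phi$ weak-$*$ in $L^\infty_t \s$; from the equation, $\partial_t \phi_n$ is uniformly bounded in $L^\infty_t H^{-s}$, so $\phi\in W^{1,\infty}(\mathbb{R};H^{-s})$. An Aubin-Lions compactness argument upgrades the convergence to strong convergence of $\phi_n$ in $L^{p}_{\mathrm{loc}}(\mathbb{R};L^{q}_{\mathrm{loc}})$ for $q<\frac{2N}{N-2s}$, which is enough to pass to the limit in $\mathcal{N}_n(\phi_n)$ via the Lipschitz estimate above. Mass conservation survives the limit, and weak lower semicontinuity of $\mathcal{J}$ along $H^s$-weakly convergent sequences (the kinetic term is convex and continuous, while the nonlocal potential term is sequentially continuous on bounded sets of $\s$ by HLS plus local compactness) delivers $\mathcal{J}(\phi(t))\le \mathcal{J}(\phi_0)$. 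The main obstacle is the bookkeeping of exponents in the nonlinear estimate: the two-scale growth $|\psi|^2+|\psi|^\mu$ forces one to choose $p$ and $r$ so that \emph{both} contributions land simultaneously in $\left[2,\frac{2N}{N-2s}\right)$, and it is precisely the interplay of $\mu$, $s$, $\beta$ and $N$ contained in the hypotheses that makes such a choice possible without producing a borderline exponent.
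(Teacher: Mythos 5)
Your proposal follows essentially the same route as the paper: the heart of the matter is the local Lipschitz estimate for $\mathcal N$ deduced from the growth condition \eqref{cond-g} via H\"older, Hardy--Littlewood--Sobolev and the Sobolev embedding, with the exponent bookkeeping made possible by $2<\mu<1+\frac{2s+\beta}{N}$ and $2s\ge N-\beta$, after which existence, mass conservation and the energy inequality follow from the standard regularization/contraction scheme of \cite{caz}, exactly as the paper invokes it. The only slip is notational: in your displayed estimate the exponents are swapped — it should read $\nr\mathcal N(\phi)-\mathcal N(\psi)\nr_{p'}\le C(K)\,\nr\phi-\psi\nr_{r}$ with $p,r\in\left[2,\frac{2N}{N-2s}\right)$ (the output lands in a dual Lebesgue space embedding into $H^{-s}$, the difference is measured in a space containing $H^s$), and, as in the paper's decomposition into the pieces $\mathcal N_{ij}$, one is free to use different exponent pairs for the quadratic and the $\mu$-power contributions rather than forcing a single pair to serve both.
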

\begin{proof}
Let us introduce the following cut-off for the function $g$, $g_1(\alpha) = \chi_{\{0 \le \alpha < 1\}}g(\alpha)$ and $g_2(\alpha) = \chi_{\{\alpha \ge 1\}}g(\alpha)$ and $G_i(z) = \int_0^{|z|}g_i(\alpha)\,d\alpha$ with obvious definition of the Euler function $\chi$. Then, one can writes
\[
\mathcal N(\phi) = \sum_{i,j = 1, 2} \mathcal N_{ij}(\phi)\:\:\text{where}\:\:\mathcal N_{ij}(\phi) = \int_{\R} |x-y|^{-(N-\beta)}G_i(|\phi|)\,dy \,g_j(\phi).
\]
We claim that there exist $p_{ij}, r_{ij} \in \left[\left.2, \frac{2N}{N-2s}\right)\right.$\footnote{If $N =1$ and $\frac12 \le s < 1$, then $\frac{2N}{N-2s}$ is interpreted as $\infty$.} such that
\begin{align}\label{nonlinear est}
\nr\mathcal N_{ij}(\phi)-\mathcal N_{ij}(\psi)\nr_{p'_{ij}} \leq \eta(K) \nr\phi-\psi\nr_{r_{ij}},
\end{align}
for some constant $\const(K)$ with $\eta(K) \leq \eta \,K^{a_{i\!j}}$, $a_{i\!j} > 0$ for all $1\leq i,j\leq 2$, provided $\nr\phi\nr_{H^s} + \nr\psi\nr_{H^s} \le K$.
This implies that $\mathcal N: H^s \to H^{-s}$ is a Lipschitz map on a bounded sets of $H^s$. Indeed, let $\mu_1 = 2$ and $\mu_2 = \mu$. Then we have
\begin{align*}
|\mathcal N_{ij}(\phi)-\mathcal N_{ij}(\psi)| &\le \eta \int_{\R} |x-y|^{-(N-\beta)}(|\phi|^{\mu_i-1} +|\psi|^{\mu_i-1})|\phi-\psi|\,dy |\phi|^{\mu_j-1}\\
&+ \eta \int_{\R} |x-y|^{-(N-\beta)}|\psi|^{\mu_i}\,dy (|\phi|^{\mu_j-2} + |\psi|^{\mu_j-2})|\phi-\psi|.
\end{align*}
By H\"{o}lder's and Hardy-Littlewood-Sobolev inequalities with indices $p_{ij}, r_{ij}$ such that
\begin{align}\label{pijrij}
1-\frac1{p_{ij}} = \frac{\mu_i}{r_{ij}} - \frac\beta{N} + \frac{\mu_j-1}{r_{ij}},\;\;\frac{\mu_i}{r_{ij}} > \frac\beta{N},
\end{align}
we obtain
\begin{align*}
\nr\mathcal N_{ij}(\phi) - \mathcal N_{ij}(\psi)\nr_{p'_{ij}} &\le \eta\,\left[(\nr\phi\nr_{r_{ij}}^{\mu_i-1}+\nr\psi\nr_{r_{ij}}^{\mu_i-1})\nr \phi\nr_{r_{ij}}^{\mu_j-1}\right. \\&\left.+ \nr\psi\nr_{r_{ij}}^{\mu_i}(\nr\phi\nr_{r_{ij}}^{\mu_j-2}+\nr\psi\nr_{r_{ij}}^{\mu_j-2})\right]\nr\phi-\psi\nr_{r_{ij}}.
\end{align*}
Thus if $p_{ij}, r_{ij} \in \left.\left[2, \frac{2N}{N-2s}\right)\right.$, then Sobolev inequality shows \eqref{nonlinear est}.
Now we show that there exist  $\pij, \rij \in [2, \frac{2N}{N-2s})$ such that  the combinations \eqref{pijrij} hold true. If $\pij, \rij$ satisfy \eqref{pijrij}, then they are on the line
\begin{align}\label{line}
\frac1{\rij} = \frac1{\mu_i+\mu_j-1}(1 + \frac\beta{N} - \frac1{\pij}).
\end{align}
Since $\frac1{\mu_i+\mu_j-1}(1+\frac\beta{N}-\frac12) < \frac12$ and $\frac{N-2s}{2N} < \frac1{\mu_i+\mu_j-1}(1+\frac\beta{N}-\frac{N-2s}{2N})$, the line \eqref{line} of $(\frac1{\pij}, \frac1{\rij})$ always passes through the open square $(\frac{N-2s}{2N}, \frac12) \times (\frac{N-2s}{2N}, \frac12)$. We have only to find a pair $(\frac1{\pij}, \frac1{\rij})$ of line \eqref{line} such that $\frac{\mu_i}{\rij} > \frac\beta{N}$. If $\frac{\mu_i}{\rij} > \frac\beta{N}$, then
$$
\frac1{\pij} < 1 - \frac{\mu_j-1}{\mu_i}\frac{\beta}{N}.
$$
So, it suffices to show that
\begin{align}\label{last}
\max\left(\frac1{p_0}, \frac{N-2s}{2N}\right) < 1 - \frac{\mu_j-1}{\mu_i}\frac{\beta}{N},
\end{align}
where $\frac1{p_0}$ is the point of line \eqref{line} when $\frac1{\rij} = \frac12$, that is, $\frac1{p_0} = 1 + \frac\beta{N} - \frac{\mu_i+\mu_j-1}{2}$. In fact, it is an easy matter to show \eqref{last} from the condition $\mu \in \left(2, 1 + \frac{\beta+2s}{N}\right)$ and we leave the proof to the reader. The proof of Proposition \ref{ws1} follows now by a straightforward application of a contraction argument.
\end{proof}
\subsection{Uniqueness}
Since the case $N = 1$ can be treated as in \cite{caz}, we omit the details. When $N \ge 3$, the uniqueness of weak solutions can be shown   by a weighted Strichartz
and convolution estimates. For that purpose, we introduce the
following mixed norm for all $1\le m, \widetilde m < \infty$
\[
\nr h\nr_{L_\rho^m L_\sigma^{\widetilde m}}:=(\int_0^\infty
(\int_{S^{N-1}}|h(\rho\sigma)|^{\widetilde m}\,d\sigma)^\frac
 m{\widetilde m}\,\rho^{n-1} d\rho)^\frac1m.
\]
The case $m = \infty$ or $\widetilde m =\infty$ can be defined is a usual way. Then we have the following.
\begin{proposition}\label{unique}
Let $N \ge 3$, $\frac{N}{2(N-1)} < s < 1$, $N-s+\frac12 < \beta < \min(N, \frac{3N}{2}-s-\frac{N}{4s})$, and
 $g$ such that the condition \eqref{cond-g} holds true with
\[
\max\left(2, 1 + \frac{2\beta-N}{N-2s}\right) < \mu < 2+ \frac{N}{N-2s}\,\frac{2s-1-2N+2\beta}{2s-1+N}.
\]
Then the $H^s$-weak solution to the problem $\mathscr S$ constructed in proposition \ref{ws1} is unique.
\end{proposition}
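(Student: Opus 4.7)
The natural strategy is to subtract the Duhamel formulas for two weak solutions $\phi_1,\phi_2$ with the same data, obtaining
\[
w(t)=\phi_1(t)-\phi_2(t)=-i\int_0^t e^{i(t-\tau)(-\Delta)^s}\bigl[\mathcal N(\phi_1)-\mathcal N(\phi_2)\bigr](\tau)\,d\tau,
\]
and to run a contraction/Gronwall argument in a space built from the radial-angular mixed norms $L^q_t L^m_\rho L^{\widetilde m}_\sigma$ introduced just before the statement. Since the endpoint Strichartz estimate for $e^{it(-\Delta)^s}$ is too weak in pure $L^p_x$ to absorb a Hartree nonlinearity with the prescribed range of $(s,\beta,\mu)$, one has to appeal to angularly-improved (Guo--Wang / Cho--Ozawa style) weighted Strichartz estimates, which hold precisely when $s>\tfrac{N}{2(N-1)}$. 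This is the reason for the restriction on $s$ in the hypothesis.

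The plan is as follows. First, split
\[
\mathcal N(\phi_1)-\mathcal N(\phi_2)=\bigl[V\star (G(|\phi_1|)-G(|\phi_2|))\bigr]g(\phi_1)+\bigl[V\star G(|\phi_2|)\bigr]\bigl(g(\phi_1)-g(\phi_2)\bigr)
\]
and use the pointwise bounds coming from \eqref{cond-g} to dominate each factor by $|w|$ times a sum of powers $|\phi_j|^{k}$ with $k\in\{1,\mu-1\}$. Second, introduce a weight $|x|^{-\gamma}$ on the outside and $|x|^{\gamma'}$ inside the convolution and apply a weighted Hardy--Littlewood--Sobolev/Stein--Weiss inequality on the radial variable together with Hölder on the angular variable. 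Third, combine this nonlinear estimate with the dual form of the weighted Strichartz inequality for $e^{it(-\Delta)^s}$ in $L^q_t L^m_\rho L^{\widetilde m}_\sigma$, to obtain an estimate of the form
\[
\|w\|_{X(0,T)}\le \eta\, T^{\theta}\, \Phi(\|\phi_1\|_{H^s}+\|\phi_2\|_{H^s})\,\|w\|_{X(0,T)},
\]
where $X$ is the relevant mixed-norm Strichartz space and $\theta>0$. Choosing $T$ small enough forces $w\equiv 0$ on $[0,T]$; then iterating covers the whole maximal existence interval.

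The main technical obstacle, and what drives every numerical restriction in the hypothesis, is finding admissible exponents $(q,m,\widetilde m,\gamma,\gamma')$ such that all three ingredients close simultaneously: the weighted Strichartz estimate must hold (this demands $s>\tfrac{N}{2(N-1)}$ and controls how much weight $\gamma$ is available); the weighted HLS inequality for $V(|x|)=|x|^{\beta-N}$ must be valid for the chosen weights (this is responsible for $\beta>N-s+\tfrac12$, which provides enough decay on the convolution side); and the resulting powers $\mu_i+\mu_j-1$ appearing from the nonlinearity must be subcritical with respect to the Sobolev embedding $H^s\hookrightarrow L^r$, which is exactly the content of the upper bound $\mu<2+\tfrac{N}{N-2s}\tfrac{2s-1-2N+2\beta}{2s-1+N}$, while the lower bound $\mu>\max(2,1+\tfrac{2\beta-N}{N-2s})$ ensures that the lowest-order piece $\mu_i=\mu_j=2$ does not dominate. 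Once such a triple of exponents is produced by solving the scaling/Hölder identities (analogous to \eqref{pijrij}--\eqref{last} but now on the line in $(1/m,1/\widetilde m)$-space), the rest of the argument is standard Gronwall and continuity.
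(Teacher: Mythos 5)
Your proposal follows essentially the same route as the paper's proof: subtracting the Duhamel formulas, estimating the nonlinear difference via the splitting driven by \eqref{cond-g}, closing a small-time absorption estimate in the norm $\|\cdot\|_{L^\infty_t L^2}+\||x|^{-\delta}\cdot\|_{L^q_tL^q_\rho L^{\widetilde q}_\sigma}$ using the weighted radial-angular Strichartz estimate (Lemma \ref{wstr}), the weighted convolution inequality (Lemma \ref{wconv}) together with Hardy--Sobolev, and then a continuation/maximality argument over the whole existence interval. The only discrepancies are cosmetic (e.g.\ the threshold $s>\frac{N}{2(N-1)}$ and the bounds on $\mu,\beta$ really come from the joint solvability of the exponent relations such as \eqref{exp-cond}, not from the Strichartz estimate alone), so the strategy is correct and matches the paper.
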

The dimension restriction $N \ge 3$ is necessary for $\frac{N}{2(N-1)} < s < 1$ and
$N-s+\frac12 < \beta < \frac{3N}{2}-s-\frac{N}{4s}$, which are needed for the exponents appearing in \eqref{exp-cond}.
\begin{proof}
Let $U(t) = e^{it(-\Delta)^s}$,  then the solution $\p$ constructed in Proposition \ref{ws1} satisfies the integral equation
\begin{align}\label{int eqn}
\p(t) = U(t)\varphi - i\int_0^t U(t-t')\mathcal N(\p(t'))\,dt'\;\;\mbox{a.e.}\;t \in (-T_{min}, T_{max}).
\end{align}


Before going further, let us recall the following weighted Strichartz estimate (see for instance Lemma 6.2 of \cite{chho} and Lemma 2 of \cite{chhwoz}).
\begin{lemma}\label{wstr}
Let $N \ge 2$ and $2 \le  q < 4s$. Then, for all $\psi\in L^2$, we have
\[
\nr|x|^{-\delta}  U(t)\psi\nr_{L^q(-t_1, t_2; L_\rho^qL_\sigma^{\widetilde q})} \le \eta\,\nr\psi\nr_{2},
\]
where $\delta = \frac{N+2s}{q}-\frac N2$, $\frac1{\widetilde q} = \frac12 - \frac1{N-1}\left(\frac{2s}{q} - \frac12\right) $ and $\eta$ is independent of $t_1, t_2$.
\end{lemma}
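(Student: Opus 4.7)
The plan is to establish this weighted Strichartz bound via a $TT^{*}$ duality argument followed by a spherical-harmonic decomposition, in the spirit of the references \cite{chho,chhwoz}. Setting $T\psi(t,x)=|x|^{-\delta}U(t)\psi(x)$, the claim is equivalent by duality to the bilinear bound
\[
\Bigl\|\int_{\mathbb R}|x|^{-\delta}U(t-t')\bigl(|y|^{-\delta}F(t',\cdot)\bigr)\,dt'\Bigr\|_{L^{q}_{t}L^{q}_{\rho}L^{\widetilde q}_{\sigma}}\le\eta^{2}\|F\|_{L^{q'}_{t}L^{q'}_{\rho}L^{\widetilde q'}_{\sigma}},
\]
so the whole problem reduces to controlling the weighted spatial propagator $|x|^{-\delta}U(\tau)|y|^{-\delta}$ in mixed-norm spaces. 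The $\mathbb R$-independence of $\eta$ will be automatic, since every step of the argument is performed on all of $\mathbb R$ in the time variable and then restricted to $(-t_{1},t_{2})$.

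Next I would exploit rotation invariance of $(-\Delta)^{s}$. Decomposing $\psi=\sum_{k\ge 0}c_{k}(\rho)Y_{k}(\sigma)$ into spherical harmonics, $U(\tau)$ preserves each block, and its radial action on $c_{k}$ can be written through a Hankel/Bessel transform of order depending on $k$. Combining a Littlewood--Paley decomposition in $|\xi|$ (needed because $|\xi|^{2s}$ with $s\in(0,1)$ is not smooth at $\xi=0$) with the standard stationary-phase asymptotics of Bessel functions should yield pointwise bounds of the form
\[
|U_{k}(\tau)(\rho,\rho')|\le \eta\,|\tau|^{-a}(\rho\rho')^{-b}
\]
for explicit exponents $a=a(N,s,q)$ and $b=b(N,s,q)$. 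The value $\delta=(N+2s)/q-N/2$ is precisely the scaling weight that renders the composition of this pointwise kernel with $\rho^{-\delta}$ and $(\rho')^{-\delta}$ scale-homogeneous along the radial direction.

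The estimate is then closed in two steps: a one-dimensional Stein--Weiss / Hardy--Littlewood--Sobolev inequality in the variable $\rho$ absorbs the weights along the radial direction, while the summation over the spherical-harmonic index $k$ is controlled by a Sobolev-type embedding $H^{\alpha}(S^{N-1})\hookrightarrow L^{\widetilde q}(S^{N-1})$, whose exponent is exactly what fixes the formula for $\widetilde q$ in the statement. The main obstacle is the sharp block-by-block dispersive control of $U(\tau)$: unlike the classical Schr\"odinger case $s=1$, the symbol $|\xi|^{2s}$ is not smooth at the origin, so the stationary-phase analysis must be carried out frequency-annulus by frequency-annulus and the bounds carefully reassembled, with the growth rate in $k$ tracked explicitly. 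Balancing the resulting decay against the Stein--Weiss scaling and the angular Sobolev cost is exactly what produces the sharp upper endpoint $q<4s$ (together with $\widetilde q$ as given) in the hypothesis.
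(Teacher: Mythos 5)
Your overall architecture is the standard one for this family of estimates, and your very last step --- trading angular regularity for angular integrability via the Sobolev embedding $H^{\frac{2s}{q}-\frac12}(S^{N-1})\hookrightarrow L^{\widetilde q}(S^{N-1})$, with $\frac1{\widetilde q}=\frac12-\frac1{N-1}\bigl(\frac{2s}{q}-\frac12\bigr)$ --- is in fact the \emph{entire} content of the paper's proof: the authors simply quote from \cite{chho} (Lemma 6.2; see also Lemma 2 of \cite{chhwoz}) the estimate $\nr|x|^{-\delta}D_\sigma^{\frac{2s}{q}-\frac12}U(t)\psi\nr_{L^q(L_\rho^qL_\sigma^2)}\le\eta\nr\psi\nr_2$ and then apply that embedding on the unit sphere. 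Note in passing that the hypothesis $q<4s$ is nothing more mysterious than the positivity of the angular regularity exponent $\frac{2s}{q}-\frac12$ in the cited estimate; it does not emerge from a delicate balancing of Stein--Weiss scaling against dispersive decay as you suggest.

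The problem is everything before that last step. You propose to re-derive the \cite{chho} estimate from scratch via $TT^*$, spherical-harmonic decomposition, Hankel transforms, Littlewood--Paley, and stationary phase, but the decisive estimates are only announced, not proved: the pointwise kernel bounds $|U_k(\tau)(\rho,\rho')|\le\eta|\tau|^{-a}(\rho\rho')^{-b}$ are asserted to ``should yield'' from Bessel asymptotics with unspecified exponents $a,b$ and, crucially, with no control of their dependence on the spherical-harmonic index $k$ (the Bessel asymptotics degenerate as the order grows, so uniformity or summability in $k$ is a genuine difficulty, not a bookkeeping one); the Stein--Weiss step is never checked against the specific value $\delta=\frac{N+2s}{q}-\frac N2$; and you yourself flag the block-by-block dispersive control as ``the main obstacle.'' Since that obstacle \emph{is} the lemma, what you have written is a plausible research program rather than a proof. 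To make it a proof you must either carry out the frequency-localized oscillatory-integral analysis with explicit $k$-dependence and verify the Stein--Weiss conditions, or do what the paper does and invoke the already-established weighted Strichartz estimate with angular derivatives from \cite{chho}, reducing the lemma to the one-line Sobolev embedding you correctly identified.
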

In \cite{chho} it was shown that
\[
\nr|x|^{-\delta} D_\sigma^{\frac{2s}q-\frac12} U(t)\psi\nr_{L^q(-t_1, t_2; L_\rho^qL_\sigma^2)} \le \eta \nr\psi\nr_{2}.
\]
 Lemma \ref{wstr} can be derived by Sobolev embedding on the unit sphere. Here $D_\sigma = \sqrt{1 - \Delta_\sigma}$ where $\Delta_\sigma$ is the Laplace-Beltrami operator on the unit sphere. Now, let us recall the following weighted convolution inequality we shall use in the sequel
\begin{lemma}[Lemma 4.3 of \cite{chonak}]\label{wconv}
Let $r \in [1, \infty]$ and $0 \le \delta \le \gamma < N-1$. If $\frac1r > \frac{\gamma}{N-1}$, then for all $f$ such that $|x|^{-(\gamma-\delta)}f\in L^1$, we have
\[
\nr|x|^\delta (|x|^{-\gamma}\ast f)\nr_{L_\rho^\infty L_\sigma^r} \le \eta\nr|x|^{-(\gamma-\delta)}f\nr_1.
\]
\end{lemma}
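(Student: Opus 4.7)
The strategy is to bound the convolution pointwise in the radial variable $\rho=|x|$ after taking an $L^{r}_{\sigma}$ norm on the sphere. Since $|x|^{\delta}=\rho^{\delta}$ does not depend on $\sigma$, I would first apply Minkowski's integral inequality to move the $L^{r}_{\sigma}$-norm inside the $y$-integral:
\[
\bigl\||x|^{\delta}(|x|^{-\gamma}\ast f)\bigr\|_{L^{r}_{\sigma}} \le \rho^{\delta}\int_{\R}\bigl\||\rho\sigma-y|^{-\gamma}\bigr\|_{L^{r}_{\sigma}(S^{N-1})}\,|f(y)|\,dy.
\]

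The central analytic step is then the uniform spherical-average estimate
\[
\Bigl(\int_{S^{N-1}}|\rho\sigma-y|^{-\gamma r}\,d\sigma\Bigr)^{1/r} \le \eta\,\max(\rho,|y|)^{-\gamma}.
\]
Writing $|\rho\sigma-y|^{2}=\rho^{2}+|y|^{2}-2\rho|y|\cos\theta$, where $\theta$ is the angle between $\sigma$ and $y/|y|$, I would split into two regimes. When $\rho$ and $|y|$ are not comparable (say $\rho\le|y|/2$ or $\rho\ge 2|y|$), $|\rho\sigma-y|$ admits a uniform lower bound of order $\max(\rho,|y|)$ and the estimate is immediate for any $r\in[1,\infty]$. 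In the critical regime $\rho\sim|y|$, the spherical integral reduces to $\int_{0}^{\pi}(\rho^{2}+|y|^{2}-2\rho|y|\cos\theta)^{-\gamma r/2}\sin^{N-2}\theta\,d\theta$, whose singular behaviour near $\theta=0$ is controlled by
\[
\int_{0}^{\pi}(\rho\theta)^{-\gamma r}\,\theta^{N-2}\,d\theta \le \eta\,\rho^{-\gamma r},
\]
which converges precisely because $\gamma r<N-1$, i.e.\ $\gamma/(N-1)<1/r$, as assumed.

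To close the estimate, I would use the elementary pointwise inequality
\[
\rho^{\delta}\,\max(\rho,|y|)^{-\gamma} \le |y|^{-(\gamma-\delta)} \qquad (0\le\delta\le\gamma),
\]
verified by considering separately $\rho\ge|y|$ (exploiting $\delta-\gamma\le 0$) and $\rho<|y|$ (exploiting $\delta\ge 0$). Substituting back yields
\[
\bigl\||x|^{\delta}(|x|^{-\gamma}\ast f)\bigr\|_{L^{r}_{\sigma}} \le \eta\int_{\R}|y|^{-(\gamma-\delta)}|f(y)|\,dy,
\]
and taking the supremum over $\rho$ gives the claim. The main obstacle is the spherical-average estimate in the regime $\rho\sim|y|$, where the integrability threshold $\gamma r<N-1$ is sharp; the remainder is essentially Minkowski plus an elementary case split. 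The auxiliary assumption $\gamma<N-1$ corresponds to the $r=\infty$ endpoint and guarantees integrability in $\theta$ uniformly across the allowed range of $r$.
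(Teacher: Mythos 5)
Your argument is correct: Minkowski's integral inequality, the spherical-average bound $\bigl(\int_{S^{N-1}}|\rho\sigma-y|^{-\gamma r}\,d\sigma\bigr)^{1/r}\le \eta\,\max(\rho,|y|)^{-\gamma}$ under $\gamma r<N-1$, and the elementary inequality $\rho^{\delta}\max(\rho,|y|)^{-\gamma}\le |y|^{-(\gamma-\delta)}$ assemble into a complete proof. Note that the paper itself offers no proof of this statement — it is quoted verbatim as Lemma 4.3 of the cited reference of Cho and Nakanishi — and your argument is essentially the standard one used there (case split in the comparability of $\rho$ and $|y|$, with the angular integrability threshold $\gamma r<N-1$ doing the work in the critical regime). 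One small remark: your closing comment that the hypothesis $\gamma<N-1$ "corresponds to the $r=\infty$ endpoint" is slightly off; since $\gamma\ge 0$, the condition $\frac1r>\frac{\gamma}{N-1}$ in fact rules out $r=\infty$ whenever $\gamma>0$, and $\gamma<N-1$ is simply what makes the hypothesis satisfiable at $r=1$ — but this is commentary and does not affect the proof.
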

Therefore, using Lemma \ref{wstr} one can readily deduce that
\begin{align}\label{w-str-inhomo}
\nr|x|^{-\delta}  \int_0^t U(t-t')f(t')\nr_{L^{q}(-t_1,t_2;
L_\rho^{q} L_\sigma^{\widetilde q})} \le \eta  \nr f\nr _{L^1(-t_1,t_2; L^2)}.
\end{align}
Thus, if we set $f = \mathcal N(\p) - \mathcal N(\ps)$ and $\gamma = N-\beta$.
Then from \eqref{int eqn} we infer
\begin{eqnarray*}
\lefteqn{\nr\p-\ps\nr_{L^\infty(-t_1, t_2; L^2)} + \nr|x|^{-\delta} (\p - \ps)\nr_{L^{q}(-t_1,t_2; L_\rho^{q} L_\sigma^{\widetilde q})}}\\
 &\le& \eta \sum_{i,j = 1}^2\int_{-t_1}^{t_2}\nr\mathcal N_{ij}(\p) - \mathcal N_{ij}(\ps)\nr_{2}\,dt',\\
&\le& \eta \sum_{i,j = 1}^2\int_{-t_1}^{t_2}\nr\int_{\R} |x-y|^{-\gamma}(|\p|^{\mu_i-1} + |\ps|^{\mu_i-1})|\p-\ps|\,dy |\p|^{\mu_j-1}\nr_2\,dt',\\
&+& \eta\sum_{i,j = 1}^2\int_{-t_1}^{t_2}\nr\int_{\R} |x-y|^{-\gamma}|\ps|^{\mu_i}\,dy (|\p|^{\mu_j-2} + |\ps|^{\mu_j-2})|\p-\ps|\nr_2\,dt',\\
&\equiv &\sum_{i,j=1}^2(\mathcal T^1_{ij} + \mathcal T^2_{ij}).
\end{eqnarray*}
We first estimate $\mathcal T^1_{ij}$ using H\"older's and Hardy-Littlewood-Sobolev inequalities. On the one side if $(i,j)=(1,2)$, since $\mu \in \left(1 + \frac{2\beta-N}{N-2s}, 1 + \frac{\beta+2s}{N}\right)$, $0 < \beta < N$ and $2s > \gamma = N-\beta$, we can find $r \in \left[2, \frac{2N}{N-2s}\right]$ such that
\[ \frac\beta{N} = \frac1r + \frac{(\mu-1)(N-2s)}{2N},\;\; \frac1r + \frac12 > \frac{\beta}{N}.
\]
Thus, we can write
\begin{align*}
\mathcal T^1_{12} &\le \eta\int_{-t_1}^{t_2}(\nr\p\nr_r+\nr\ps\nr_r)\nr\p-\ps\nr_2 |\p|_{\frac{2N}{N-2s}}^{\mu-1} \,dt',\\
&\le \eta(t_1+t_2)(\nr\p\nr_{L^\infty(-t_1, t_2; H^s)}^{\mu} + \nr\ps\nr_{L^\infty(-t_1, t_2; H^s)}^{\mu})\nr\p-\ps\nr_{L^\infty(-t_1, t_2; L^2)}.
\end{align*}
On the opposite side, if $(i,j)\neq(1,2)$, then we can choose $r \in \left[2, \frac{2N}{N-2s}\right]$ such that
\[\frac\beta N=\frac{\mu_i-1}{r} + \frac{(\mu_j-1)}r,\;\; \frac{\mu_i-1}r + \frac12 > \frac{\beta}{N}.\]
Such a combination is always possible thanks to our conditions on $\mu, \beta$ and $ s$. Therefore, we get as above
\begin{align*}
\mathcal T^1_{ij} &\le \eta\int_{-t_1}^{t_2}(\nr\p\nr_r^{\mu_i-1}+\nr\ps\nr_r^{\mu_i-1})\nr\p-\ps\nr_2 \nr\p\nr_{\frac{r}{\mu_j-1}}^{\mu_j-1} \,dt',\\
&\le \eta (t_1+t_2)(\nr\p\nr_{L^\infty(-t_1, t_2; H^s)}^{\mu_i+\mu_j-2} + \nr\ps\nr_{L^\infty(-t_1, t_2; H^s)}^{\mu_i+\mu_j-2})\nr\p-\ps\nr_{L^\infty(-t_1, t_2; L^2)}.
\end{align*}
We are kept with the estimates of $\mathcal T^2_{ij}$. If $j = 1$, then we can use  Hardy-Sobolev inequality such that for $0 < q < N$ and $2 \le p < \infty$
\begin{align}\label{h-s}
\nr|x|^{-\frac qp}f\nr_{p} \le \eta\nr f\nr_{\dot H^{\frac N2 - \frac{N-q}{p}}}.
\end{align}
In fact, we have
\begin{align*}
\mathcal T^2_{11} &\le\int_{-t_1}^{t_2}\nr\int_\R |x-y|^{-\gamma}|\ps|^2\,dy\nr_{L_x^\infty}\nr\p-\ps\nr_{2}\,dt',\\
&\le \eta\int_{-t_1}^{t_2}\nr\ps\nr_{\dot H^\frac\gamma2}^2\nr\p-\ps\nr_{2}\,dt',\\
&\le \eta(t_1+t_2)\nr\ps\nr_{L^\infty(-t_1,t_2; H^s)}^2\nr\p-\ps\nr_{L^\infty(-t_1, t_2; L^2)}.
\end{align*}
Since $\frac N2 - \frac\beta\mu \le s$ we also have
\begin{align*}
\mathcal T^2_{21} &\le\int_{-t_1}^{t_2}\nr\int_\R |x-y|^{-\gamma}|\ps|^\mu\,dy\nr_{L_x^\infty}\nr\p-\ps\nr_{2}\,dt',\\
&\le C\int_{-t_1}^{t_2}\nr\ps\nr_{\dot H^{\frac N2 - \frac\beta\mu}}^\mu\nr\p-\ps\nr_{2}\,dt',\\
&\le C(t_1+t_2)\nr\ps\nr_{L^\infty(-t_1,t_2; H^s)}^2\nr\p-\ps\nr_{L^\infty(-t_1, t_2; L^2)}.
\end{align*}
When $j = 2$, we use the weighted convolution inequality (Lemma \ref{wconv}).
The hypothesis on $\beta, \mu$ guarantees the existence of exponents $q, \widetilde q$ and $r$ satisfying the conditions of Lemmas \ref{wstr}, \ref{wconv}
and also the following combination
\begin{align}\label{exp-cond} \frac12 = \frac{(\mu-2)(N-2s)}{2N} + \frac1q = \frac1{r} + \frac{(\mu-2)(N-2s)}{2N} + \frac1{\widetilde q} .\end{align}
Hence, using the Hardy-Sobolev inequality \eqref{h-s} we write
\begin{align*}
\mathcal T^2_{i,2} &\le \int_{-t_1}^{t_2} \nr|x|^\delta\int_\R |x-y|^{-\gamma}|\ps|^{\mu_i}\,dy\nr_{L_\rho^\infty L_\sigma^{r}}\left(\nr\p\nr_\frac{2N}{N-2s}^{\mu-2}+\nr\ps\nr_{\frac{2N}{N-2s}}^{\mu-2}\right)\times \\ &\hskip200pt\times\nr|x|^{-\delta}(\p-\ps)\nr_{L_\rho^q L_\sigma^{\widetilde q}}\,dt',\\
&\le \eta\int_{-t_1}^{t_2} \nr|x|^{(-\gamma-\delta)}|\ps|^{\mu_i}\nr_1\left(\nr|\p\nr_{H^s}^{\mu-2}+\nr\ps\nr_{H^s}^{\mu-2}\right)\nr|x|^{-\delta}(\p-\ps)\nr_{L_\rho^q L_\sigma^{\widetilde q}}\,dt',\\
&\le \eta\int_{-t_1}^{t_2} \nr\ps\nr_{\dot H^{\frac N2 - \frac{\beta+\delta}{\mu_i}}}^{\mu_i}\left(\nr\p\nr_{H^s}^{\mu-2}+\nr\ps\nr_{H^s}^{\mu-2}\right)\nr|x|^{-\delta}(\p-\ps)\nr_{L_\rho^q L_\sigma^{\widetilde q}}\,dt',\\
&\le \eta(t_1+t_2)^{1-\frac1q}\left(\nr\p\nr_{L^\infty(-t_1, t_2; H^s)}^{\mu_i+\mu-2}+\nr\ps\nr_{L^\infty(-t_1, t_2; H^s)}^{\mu_i+\mu-2}\right)\times \\&\hskip200pt\times\nr|x|^{-\delta}(\p-\ps)\nr_{L^q(-t_1, t_2; L_\rho^q L_\sigma^{\widetilde q})}.
\end{align*}
Now, if $(-t_1, t_2) \subset [-T_1, T_2]$ and $\nr\p\nr_{L^\infty(-T_1, T_2; H^s)} + \nr\psi\nr_{L^\infty(-T_1, T_2; H^s)} \le K$, then by combining all the estimates above we infer
 \begin{align*}
&\nr\p-\ps\nr_{L^\infty(-t_1, t_2; L^2)} + \nr|x|^{-\delta} (\p - \psi)\nr_{L^q(-t_1,t_2; L_\rho^q L_\sigma^{\widetilde q})} \le \eta(K^2 + K^{2\mu-2})\times\\ &\hskip40pt \times (t_1+t_2)^{1-\frac1q} \left(\nr\p-\ps\nr_{L^\infty(-t_1, t_2; L^2)} + \nr|x|^{-\delta} (\p - \psi)\nr_{L^q(-t_1,t_2; L_\rho^q L_\sigma^{\widetilde q})}\right).
\end{align*}
Thus, $\p = \psi$ on $[-t_1, t_2]$ for sufficiently small $t_1, t_2$. Let $I = (-a , b)$ be the maximal interval  of $[-T_1, T_2]$ with
\[\nr\p - \psi\nr_{L^\infty(-c, d; L^2)} + \nr|x|^{-\delta} (\p - \psi)\nr_{L^q(-c,d; L_\rho^q L_\sigma^{\widetilde q})}= 0,\:c < a, d < b.
\]
Assume that $a < T_1$ or $b < T_2$. Without loss of generality, we may also assume that $a < T_1$ and $b < T_2$. Then for a small $\varepsilon > 0$ we can find $ a < t_1 < T_1, b < t_2 < T_2$ such that
\begin{align*}
&\nr\p-\psi\nr_{L^\infty(-t_1, t_2; L^2)} + \nr|x|^{-\delta} (\p - \psi)\nr_{L^q(-t_1,t_2; L_\rho^q L_\sigma^{\widetilde q})}\\
&\leq (K^2 + K^{2\mu-2}) (t_1+t_2-a-b)^{1-\frac1q}\times\\ &\hskip50pt \times \left(\nr\p-\psi\nr_{L^\infty(-t_1, t_2; L^2)}+ \nr|x|^{-\delta} (\p - \psi)\nr_{L^q(-t_1,t_2; L_\rho^q L_\sigma^{\widetilde q})}\right),\\
&\le (1-\varepsilon)\left(\nr\p-\psi\nr_{L^\infty(-t_1, t_2; L^2)} + \nr|x|^{-\delta} (\p - \psi)\nr_{L^q(-t_1,t_2; L_\rho^q L_\sigma^{\widetilde q})}\right).
\end{align*}
This contradicts the maximality of $I$. Thus $I = [-T_1, T_2]$. Since $[-T_1, T_2]$ is arbitrarily taken in $(-T_{min}, T_{max})$, we finally get the whole uniqueness and the Proposition \ref{unique} is now proved.
\end{proof}
\subsection{Global well-posedness}
Using the argument of \cite{caz},  one can show
 that the uniqueness
implies actually well-posedness and conservation laws:
\begin{align*}
&\bullet\; \p \in C(-T_{min}, T_{max}; H^s) \cap C^1(-T_{min}, T_{max}; H^{-s}), \\
&\bullet\; \p\;\;\mbox{depends continuously on}\;\; \phi_0\;\;\mbox{in}\;\;H^s,\\
&\bullet\; \nr\p(t)\nr_2 = \nr\phi_0\nr_2\;\;\mbox{ and}\;\;\mathcal
 J(\p(t)) = \mathcal J(\phi_0)\;\; \forall\; t \in (-T_{min}, T_{max}).
\end{align*}
The proofs of these points are standard, we omit them and refer to \cite{caz}. Now we remark that the well-posedness is actually global by establishing a uniform bound on the $H^s$ norm of $\phi(t)$ for all $t\in (-T_{min}, T_{max})$.
\vskip6pt
We first consider the global existence of weak solutions. Suppose $\p$ is a weak solution on $(-T_{min}, T_{max})$ as in Proposition \ref{ws1}. We show that
$\nr\p(t)\nr_{H^s}$ is bounded for all $t \in (-T_{min}, T_{max})$. For this purpose let us introduce the following notation
\begin{equation} \label{def-decomp}
\mathcal{D}(G(|\phi|),G(|\phi|))=\sum_{i,j=1}^2\mathcal{D}_{i,j}(|\phi|),\quad \mathcal{D}_{i,j}(|\phi|):=\mathcal{D}(G_i(|\phi|),G_j(|\phi|)),
\end{equation}
where obviously we set $G_i:=\int_0^{|z|}g_i(\alpha)\,d\alpha$ and recall that the $g_i$ are defined as $g_1(\alpha) = \chi_{\{0 \le \alpha < 1\}}g(\alpha)$ and $g_2(\alpha) = \chi_{\{\alpha \ge 1\}}g(\alpha)$. Using Hardy-Littlewood-Sobolev and the fractional Gagliardo-Nirenberg inequalities and the assumption $\mathcal{A}_0$ we can write the following estimates
\begin{align}
&\mathcal{D}_{1,1} \leq \eta \,\nr u\nr^4_{\frac{4N}{N+\beta}} \leq \eta \nr u\nr^{4-\frac{N-\beta}{s}}_{2}\,\nr u \nr^{\frac{N-\beta}{s}}_{\dot H^s},\label{estimatea} \\
&\nonumber \\
&\mathcal{D}_{2,2} \leq \eta \,\nr u\nr^{2\mu}_{\frac{2N\mu}{N+\beta}}\leq \eta \nr u\nr^{2\mu-\frac{N(\mu-1)-\beta}{s}}_{2}\,\nr u \nr^{\frac{N(\mu-1)-\beta}{s}}_{\dot H^s},\label{estimateb}\\
& \nonumber\\
& \mathcal{D}_{1,2},\,\mathcal{D}_{2,1} \leq \eta \,\nr u\nr^{\mu+2-\frac{N\mu-2\beta}{2s}}_{2} \,\nr u\nr^\frac{N\mu-2\beta}{2s}_{\dot H^s}.\label{estimatec}
\end{align}
Since $N-\beta<2s$, then $0<\frac{N-\beta}{s}<2$ and $4-\frac{N-\beta}{s}>2$. As well since $2\leq \mu < 1+\frac{2s+\beta}{N}$, then $0<\frac{N-\beta}{s}\leq \frac{N(\mu-1)-\beta}{s}<2$ and $ 2< \mu-\frac{N(\mu-1)-\beta}{s}$. Eventually, we have  $0<\frac{N-\beta}{s}\leq \frac{N\mu-2\beta}{2s}$ and $2\leq\mu<\mu+1+\frac{\beta-N}{2s}<\mu+2-\frac{N\mu-2\beta}{2s}$. The estimates above can be summarized as follows with $\mu_1=2$ and $\mu_2=\mu$.
\begin{eqnarray}
\mathcal{D}_{i,j}(|u|)&\leq& \const \int_{\R\times\R}\frac{|u(x)|^{\mu_i}|u(y)|^{\mu_j}}{|x-y|^{N-\beta}}\,dxdy,\nonumber\\
&\leq& \const \,\nr u\nr^{\mu_i+\mu_j-\gamma_{i,j}}_{2} \, \nr u\nr^{\gamma_{i,j}}_{\dot H^{s}} \label{estimate0}
\end{eqnarray}
where
\[ \gamma_{i,j} = \frac{N}{s}\left(1+\frac\beta N\right) - \left(\frac{N}{2s}-1\right)(\mu_i+\mu_j).\]
Thus, we have clearly
\begin{align*}
\frac12 \nr\p\nr_{H^s}^2 &= \frac12 \nr\p\nr_2^2 + \mathcal J(\p) +\mathcal{D}(G(|\phi|),G(|\phi|)), \\
& \le \frac12 \nr\phi_0\nr_2^2 + \mathcal J(\phi_0) + \eta\sum_{i,j = 1,2}\nr\phi_0\nr_2^{\frac{2\gamma_{ij}}{2 - \mu_i-\mu_j+\gamma_{ij}}} + \frac14\nr\p\nr_{H^s}^2.
\end{align*}
Thus
\[ \nr\p\nr_{H^s} \leq \eta \left(\nr\phi_0\nr_{H^s}\right),\quad \text{for all}\:t\in(-T_{min}, T_{max}).\]
Therefore $T_{min} = T_{max} = \infty$. If $s, \beta, \mu$ satisfy the hypothesis of Proposition \ref{unique}, then we get the global well-posedness. Eventually, combining this fact with the Propositions \ref{ws1} and \ref{unique} prove Theorem \ref{thm1}.
\section{Existence of standing waves}
In this section we study the minimization problem $\tilde{\mathscr
S}$. We prove  the existence of a solution to $\tilde{\mathscr S}$
using a variational approach via the concentration-compactness
method of P-L. Lions \cite{Lions}. Indeed, we aim to prove the existence
of critical points to the energy functional
\begin{eqnarray*}
\mathcal{E}(u)&=&\frac{1}{2} \nr\nabla_s u\nr^2_{2} -\frac{1}{2}\,
\mathcal{D}(G(|u|),G(|u|)).
\end{eqnarray*}
In other words, we look for a function $u_\lambda$ such that
\[\mathcal{E}(u_\lambda)=\mathcal I_\lambda= \inf\left\lbrace \mathcal{E}(u),
\quad u \in H^s(\mathbb{R}^N),\quad \int_{\mathbb{R}^N} |u(x)|^2 \,dx=\lambda
\right \rbrace.\]
As noticed in the introduction of this paper, this problem has been studied in various situation depending on the value of $s$ and the conditions on $\beta$ and the integrand $G$ in Ref. \cite{chho,hajaiej13,Lieb15}.
In order to prove the existence of critical points to the functional $\mathcal{E}$, we start with the following claim
\begin{proposition} \label{propc1}
For all $\lambda>0$ and $G$ such that $\mathcal A_0$ and $\mathcal{A}_1$ hold true, we have
\begin{itemize}
\item The functional $\mathcal E \in C^1(H^s,\mathbb R)$ and there exists a constant $\eta>0$ such that
\[ \nr \mathcal{E}'(u)\nr_{H^{-s}} \leq \eta\left(\nr u\nr_{H^s} + \nr u\nr_{H^s}^{\frac{2s+\beta}{N}} \right).\]
\item $-\infty<\mathcal I_\lambda<0$.
\item Each minimizing sequence for the problem $\mathcal I_\lambda$ is bounded in $H^s$.
\end{itemize}
\end{proposition}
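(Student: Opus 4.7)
The three assertions rest on the same underlying bounds already proved in Section 2, namely the Hardy--Littlewood--Sobolev estimates \eqref{estimatea}--\eqref{estimatec} for $\mathcal{D}(G(|u|),G(|u|))$ via Gagliardo--Nirenberg interpolation. The central observation, which I will invoke throughout, is that under $\mathcal{A}_0$ with $\mu < 1 + \frac{2s+\beta}{N}$ and $N-\beta \le 2s$, the exponents $\gamma_{ij}$ appearing in $\mathcal{D}_{ij}(|u|) \le \eta\, \|u\|_2^{\mu_i+\mu_j-\gamma_{ij}} \|u\|_{\dot H^s}^{\gamma_{ij}}$ are all $\le 2$, with strict inequality away from the critical corner $N-\beta = 2s$, $(i,j)=(1,1)$. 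Restricted to the constraint $\|u\|_2^2 = \lambda$, this gives
$\mathcal{E}(u) \ge \tfrac12 \|u\|_{\dot H^s}^2 - C(\lambda) \sum_{i,j} \|u\|_{\dot H^s}^{\gamma_{ij}}$,
which is both bounded below and coercive in $\|u\|_{\dot H^s}$ by Young's inequality. This yields simultaneously $\mathcal{I}_\lambda > -\infty$ and the boundedness in $H^s$ of any minimizing sequence (combined with the $L^2$ constraint).

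For the upper bound $\mathcal{I}_\lambda < 0$, I use the $L^2$-preserving scaling $u_\theta(x) := \theta^{N/2} u_0(\theta x)$ with $u_0 \in H^s$, $\|u_0\|_2^2 = \lambda$. The kinetic energy scales as $\|\nabla_s u_\theta\|_2^2 = \theta^{2s}\|\nabla_s u_0\|_2^2$. For the Hartree piece, applying the sub-homogeneity hypothesis in $\mathcal{A}_1$ with $\tau = \theta^{N/2}$ gives $G(|u_\theta(x)|) \ge \theta^{N/2 + (2s+\beta)/4}\, G(|u_0(\theta x)|)$, and the change of variables $y = \theta x$ then produces
$\mathcal{D}(G(|u_\theta|), G(|u_\theta|)) \ge \theta^{(2s-\beta)/2}\, \mathcal{D}(G(|u_0|), G(|u_0|))$.
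Since $2s > (2s-\beta)/2$ holds unconditionally, the Hartree term dominates the kinetic one as $\theta \to 0^+$, provided $\mathcal{D}(G(|u_0|),G(|u_0|)) > 0$. This positivity is secured by picking a nonzero $u_0$ and invoking the pointwise lower bound $G(\psi) \ge \eta\, \psi^\alpha$ for $0 < \psi \ll 1$ from the first part of $\mathcal{A}_1$. Hence $\mathcal{E}(u_\theta) < 0$ for sufficiently small $\theta$, which gives $\mathcal{I}_\lambda < 0$.

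The $C^1$ regularity and the stated derivative bound reduce to a routine verification. The quadratic kinetic term is smooth and contributes $\|(-\Delta)^s u\|_{H^{-s}} \le \|u\|_{H^s}$, which accounts for the first summand on the right-hand side. For the nonlocal term, the formal G\^ateaux derivative in direction $\varphi$ is $\int (V \star G(|u|))\, G'(u)\, \varphi$; promoting this to a Fr\'echet derivative continuous in $u$ follows from exactly the Lipschitz estimate carried out in the proof of Proposition \ref{ws1}, using the splitting $G = G_1 + G_2$ and the growth bounds $\mathcal{A}_0$. The quantitative bound on $\|\mathcal{E}'(u)\|_{H^{-s}}$ is obtained by duality: testing against $\varphi \in H^s$, applying HLS to the convolution, H\"older to separate $G'(u)$ from $\varphi$, and Sobolev--Gagliardo--Nirenberg to match $H^s$ norms on the right-hand side; the two competing exponents reflect the quadratic and $\mu$-th-power terms in the bounds $G(\psi) \lesssim |\psi|^2 + |\psi|^\mu$ and $|G'(\psi)| \lesssim |\psi| + |\psi|^{\mu-1}$, with the interpolation weight $(2s+\beta)/N$ arising from the subcritical ceiling on $\mu$. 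The only genuine subtlety---rather than a deep obstacle---is the critical case $N - \beta = 2s$, where $\gamma_{11} = 2$ and the coercivity step in the first paragraph requires $\lambda$ to lie below a threshold determined by the sharp constant in the HLS--Gagliardo--Nirenberg chain; in the strictly subcritical regime $N - \beta < 2s$ no such mass restriction is needed.
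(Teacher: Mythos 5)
Your proposal is correct and, for the first and third bullets, follows the paper's own route: the $C^1$ property and the derivative bound are treated at the same citation-level of detail as in the paper (which simply refers to \cite{Hichem}), and the bound $\mathcal I_\lambda>-\infty$ together with the $H^s$-boundedness of minimizing sequences is obtained exactly as in the paper, from the Hardy--Littlewood--Sobolev and Gagliardo--Nirenberg estimates (\ref{estimatea})--(\ref{estimatec}) plus Young's inequality, using that the $\dot H^s$-exponents stay strictly below $2$ in the subcritical regime. Where you genuinely deviate is the proof of $\mathcal I_\lambda<0$: you use the same $L^2$-preserving scaling $u_\theta=\theta^{N/2}u_0(\theta\,\cdot)$ (the paper's $u_\kappa=\kappa^{1/2}u(\kappa^{1/N}\cdot)$ with $\kappa=\theta^N$), but you control the Hartree term through the sub-homogeneity half of $\mathcal A_1$, $G(\theta\psi)\geq\theta^{1+\frac{2s+\beta}{2N}}G(\psi)$, which yields the exponent $\tfrac{2s-\beta}{2}$ in $\theta$ against $2s$ for the kinetic part, and you invoke $G(\psi)\geq\const\,\psi^\alpha$ only to secure $\mathcal D(G(|u_0|),G(|u_0|))>0$; the paper instead applies $G(\psi)\geq\const\,\psi^\alpha$ directly in the scaling, obtaining the exponent $\alpha-\left(1+\frac\beta N\right)$. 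Your variant is arguably cleaner: it avoids the pointwise smallness of $\kappa^{1/2}|u|$ implicitly needed to use the $\psi^\alpha$ lower bound under the integral, and it does not rely on $\alpha>1+\frac\beta N$, which is not part of $\mathcal A_1$ (only the upper bound $\alpha<1+\frac{2s+\beta}{N}$ matters in the paper's argument anyway); the one precision you should add is that $u_0$ must be chosen so that $|u_0|$ takes small positive values on a set of positive measure (e.g.\ a normalized Gaussian), since for an arbitrary nonzero $u_0$ the lower bound on $G$ near zero gives nothing. Your closing caveat about the critical corner $N-\beta=2s$ is consistent with the paper, which excludes that case ($\mu=2$, $2s=N-\beta$, treated in \cite{chho}) and works with $\mu\in\left(2,1+\frac{2s+\beta}{N}\right)$, which forces $N-\beta<2s$.
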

\begin{proof}
Let us mention that only assumption $\mathcal{A}_0$ is needed to prove the $C^1$ property of the energy functional $\mathcal{E}$. The proof of this claim is standard and we refer the reader to Ref. \cite{Hichem} for details.  Now, we prove the second assertion. Let $u\in \s$ such that $\nr u\nr_2=\sqrt\lambda$ and assume $\mathcal{A}_0$. Then, on the one hand, thanks to (\ref{estimatea}\,--\ref{estimatec}), it is rather easy to show using Young's inequality that for all $\epsilon_1,\epsilon_2$ and $\epsilon_3$, there exist $C_{\epsilon_1},C_{\epsilon_2},C_{\epsilon_3}>0$ such that
\begin{eqnarray}
\mathcal{D}_{1,1} &\leq& \const \left(\epsilon_1 \,\nr u\nr^2_{\dot H^s}+C_{\epsilon_1} \lambda^{e_1}\right),\quad e_1:=\frac{4s+\beta-N}{2s+\beta-N}.\label{estimated}\\ &&\nonumber\\
\mathcal{D}_{1,2} &\leq& \const \left(\epsilon_2 \,\nr u\nr^2_{\dot H^s}+C_{\epsilon_2} \lambda^{e_2}\right),\quad e_2:=\frac{2s\mu+\beta-N(\mu-1)}{2s+\beta-N(\mu-1)}.\label{estimatee}\\ &&\nonumber\\
\mathcal{D}_{1,2},\,\mathcal{D}_{2,1} &\leq& \const \left(\epsilon_1\nr u\nr^2_{\dot H^s}+C_{\epsilon_3} \lambda^{e_3}\right),\quad e_3:=1+\frac{2s\mu}{4s-N\mu+2\beta}.\label{estimatef}
\end{eqnarray}
Observe that $0<2s+\beta-N< 4s+\beta-N$ so that $e_1>1$. Also, $0<2s+\beta-N(\mu-1)< 2s\mu+\beta-N(\mu-1)$ so that  $e_2>1$. Eventually, $4s-N\mu+2\beta>2s+\beta-N>0$ so that $\frac{2s\mu}{4s-N\mu+2\beta}>0$ and $e_3>1$. Therefore, for sufficiently small  $\epsilon_1,\epsilon_2$ and $\epsilon_3$, one has
\begin{eqnarray*}
\mathcal{E}(u)&\geq& \left(\frac12- \const(\epsilon_1+\epsilon_2+\epsilon_3) \right)\, \nr u\nr^{2}_{H^{s}}-\frac12-\const \left(C_{\epsilon_1} \lambda^{e_1}+C_{\epsilon_2} \lambda^{e_2}+ C_{\epsilon_3} \lambda^{e_3}\right), \\
&\geq &-\frac12 \lambda-\const \left(C_{\epsilon_1} \lambda^{e_1}+C_{\epsilon_2} \lambda^{e_2}+ C_{\epsilon_3} \lambda^{e_3}\right).
\end{eqnarray*}
Thus,  we obtain $\mathcal{I}_\lambda>-\infty$. On the other hand, let us introduce for all $\kappa\in \mathbb{R}$, the rescaled function $u_\kappa=\kappa^\frac12 u(\kappa^{\frac1N}\cdot)$. Obviously, one has $\int_\R |u_\kappa|^2=\lambda$ and using $\mathcal{A}_1$
\begin{eqnarray*}
\mathcal{E}(u_\kappa) \leq \frac12 \kappa^\frac{2s}{N} \int_\R |(-\Delta)^s u(x)|^2 dx -\frac{\kappa^{\alpha -\left(1+\frac\beta N\right)}}{2}\,\mathcal{D}(|u(x)|^\alpha,|u(y)|^\alpha).
\end{eqnarray*}
We have $0< \alpha -\left(1+\frac\beta N\right)< \frac{2s}{N}$, therefore we can take $\kappa$ small enough to get $\mathcal{E}(u_\kappa)<0$. Thus, $\mathcal{I}_\lambda\leq \mathcal{E}(u_\kappa) <0$.
\vskip6pt
We are kept with the proof of the third assertion. Let $(u_n)_{n\in \mathbb{N}}$ be a minimizing sequence for the problem $\mathcal{I_{\lambda}}$.  Therefore, thanks to (\ref{estimated}--\ref{estimatef}), we have  for all $u\in H^s$
\begin{eqnarray*}
\mathcal{D}(G(|u|),G(|u|)) &\leq& \const (\epsilon_1+\epsilon_2+\epsilon_3)\, \nr u\nr^{2}_{\dot H^{s}} +\const \left(C_{\epsilon_1} \lambda^{e_1}+C_{\epsilon_2} \lambda^{e_2}+ C_{\epsilon_3} \lambda^{e_3}\right).
\end{eqnarray*}
Hence
\begin{eqnarray*}
\nr u_n\nr^2_{H^s} &=& 2\,\mathcal{E}(u_n) + \nr u_n\nr^2_{2} + \mathcal{D}(G(|u_n|),G(|u_n|)), \\
&\leq& 2\,\mathcal{I}_\lambda +\lambda + \const (\epsilon_1+\epsilon_2+\epsilon_3)\, \nr u_n\nr^{2}_{ H^{s}} +\const \left(C_{\epsilon_1} \lambda^{e_1}+C_{\epsilon_2} \lambda^{e_2}+ C_{\epsilon_3} \lambda^{e_3}\right).
\end{eqnarray*}
Eventually, we pick $\epsilon_1,\epsilon_2$ and $\epsilon_3$ such that $ \const (\epsilon_1+\epsilon_2+\epsilon_3)<1$, we get immediately that the minimizing sequence $(u_n)_{n\in \mathbb{N}}$ is bounded in $H^s$.
\vskip10pt
\end{proof}
Before going further, let us introduce the so called { L\'evy concentration function}
\[\mathcal{Q}_n(r)=\sup_{y\in \R}\int_{B(y,r)} \,|u_n(x)|^2 dx.\]
It is known that each $\mathcal{Q}_n$ is nondecreasing on $(0,+\infty)$. Also, with the Helly's selection Theorem, the sequence $(\mathcal{Q}_n)_{n\in \mathbb{N}}$ has a subsequence that we still denote $(\mathcal{Q}_n)_{n\in \mathbb{N}}$ by abuse of notation, such that there is a nondecreasing function $\mathcal{Q}(r)$ satisfying
\[\mathcal{Q}_n(r)\stcn \mathcal{Q}(r),\quad\text{for all} \quad r>0.\]
Since $0\leq \mathcal{Q}_n(r)\leq \lambda$, there exists $\beta \in \mathbb{R}$ such that $0\leq \beta \leq \lambda$ such that
\[\mathcal{Q}(r)\stcr \gamma.\]
Briefly speaking, a minimizing sequence $(u_n)_{n\in \mathbb{N}}$ for the problem $\mathcal{I_{\lambda}}$ can only be in one of the following situations:
\begin{itemize}
\item Vanishing, i.e. $\gamma=0$.
\item Dichotomy, i.e. $0<\gamma<\lambda$.
\item Compactness, i.e. $\gamma=\lambda$.
\end{itemize}
In the sequel we shall proceed by elimination and show that
vanishing and dichotomy do not occur. Therefore, compactness holds
true and we are done. We start with the following
\begin{proposition}\label{novanishing}
Let $\lambda>0$ and $(u_n)_{n\in \mathbb{N}}$ be a minimizing
sequence of  problem $\mathcal I_\lambda$ with $G$ such that
 $\mathcal A_0$ and $\mathcal A_1$ hold true. Then $\gamma>0$.
\end{proposition}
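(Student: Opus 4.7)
The plan is a standard concentration-compactness argument by contradiction: suppose vanishing occurs ($\gamma = 0$), then show that the full nonlinear part $\mathcal D(G(|u_n|),G(|u_n|))$ tends to zero along the minimizing sequence, which would force $\liminf_n \mathcal E(u_n)\geq 0$ and contradict the strict negativity $\mathcal I_\lambda<0$ proved in Proposition \ref{propc1}.

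First I would unfold what $\gamma=0$ means in terms of the L\'evy concentration function. Since $\mathcal Q$ is nondecreasing and $\mathcal Q(r)\to\gamma=0$ as $r\to\infty$, one has $\mathcal Q(r)=0$ for every $r>0$, hence $\mathcal Q_n(r)=\sup_{y\in\R}\int_{B(y,r)}|u_n|^2\,dx\to 0$ for each fixed $r>0$. This is exactly the hypothesis of the $H^s$-version of Lions' vanishing lemma (proved e.g.\ via Plancherel and a covering argument, or by Fourier truncation combined with the Sobolev embedding), which yields $u_n\to 0$ strongly in $L^p(\R)$ for every $p\in\bigl(2,\frac{2N}{N-2s}\bigr)$, using that $(u_n)$ is bounded in $H^s$ by the third item of Proposition \ref{propc1}.

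Next I would convert this $L^p$-vanishing into $\mathcal D(G(|u_n|),G(|u_n|))\to 0$ through the decomposition \eqref{def-decomp} and the Hardy--Littlewood--Sobolev estimates \eqref{estimatea}--\eqref{estimatec}. The four pieces $\mathcal D_{i,j}$ are controlled by $\|u_n\|_{L^{p_{ij}}}^{\mu_i+\mu_j}$ with
\[
p_{11}=\tfrac{4N}{N+\beta},\qquad p_{22}=\tfrac{2N\mu}{N+\beta},\qquad p_{12}=p_{21}=\tfrac{2N(\mu+2)}{2(N+\beta)}\text{ (via H\"older interpolation).}
\]
All of these exponents lie strictly inside $\bigl(2,\frac{2N}{N-2s}\bigr)$: the lower bound $p_{ij}>2$ uses $\beta<N$ and $\mu\geq 2$, while the upper bound $p_{ij}<\frac{2N}{N-2s}$ reduces, in the worst case $\mathcal D_{2,2}$, to $\mu<1+\frac{2s+\beta}{N}$, which is exactly the upper limit in $\mathcal A_0$. (The ranges $p_{ij}>2$ are precisely where Lions' lemma applies; the ranges $p_{ij}<\frac{2N}{N-2s}$ are what guarantees boundedness of $\|u_n\|_{L^{p_{ij}}}$ through Sobolev.) Consequently $\mathcal D(G(|u_n|),G(|u_n|))\to 0$.

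Finally, since the kinetic part is nonnegative,
\[
\mathcal I_\lambda=\lim_{n\to\infty}\mathcal E(u_n)=\lim_{n\to\infty}\Bigl(\tfrac12\|\nabla_s u_n\|_2^2-\tfrac12\mathcal D(G(|u_n|),G(|u_n|))\Bigr)\geq 0,
\]
contradicting $\mathcal I_\lambda<0$. Therefore $\gamma>0$. The only genuine technical point is checking that all four exponents $p_{ij}$ lie in the admissible Sobolev window; everything else is a verbatim execution of the Lions recipe. I do not expect a serious obstacle here, since the upper end of the window is calibrated to the subcritical condition on $\mu$ in $\mathcal A_0$, and assumption $\mathcal A_1$ is not needed at this stage (it will only be used later to exclude dichotomy).
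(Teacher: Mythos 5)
Your proof is correct, and it reaches the contradiction by the same overall scheme as the paper (vanishing forces the nonlocal term to vanish, which is incompatible with $\mathcal I_\lambda<0$ from Proposition \ref{propc1}), but the mechanics differ in a genuine way. You invoke the fractional version of Lions' vanishing lemma as a black box to get $u_n\to 0$ in $L^p(\R)$ for all $p\in\left(2,\frac{2N}{N-2s}\right)$, and then kill each $\mathcal D_{i,j}$ by a plain Hardy--Littlewood--Sobolev bound with exponents $p_{11}=\frac{4N}{N+\beta}$, $p_{22}=\frac{2N\mu}{N+\beta}$, $p_{12}=p_{21}=\frac{N(\mu+2)}{N+\beta}$; your verification that these lie strictly inside the admissible window is correct (the binding case is indeed $p_{22}$, for which the exact requirement is $\mu<\frac{N+\beta}{N-2s}$, a condition strictly weaker than, hence implied by, the bound $\mu<1+\frac{2s+\beta}{N}$ of $\mathcal A_0$ — your phrasing ``exactly the upper limit'' is slightly loose but harmless). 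The paper does not use the $L^p$-vanishing lemma at all: it first extracts a uniform lower bound $\delta>0$ on $\mathcal D(G(|u_n|),G(|u_n|))$ from $\mathcal I_\lambda<0$, and then shows directly that vanishing drives $\mathcal D$ to zero by covering $\R$ with balls of fixed radius and estimating the interaction with H\"older and Hardy--Littlewood--Sobolev so that one factor is $\left(\sup_{y}\int_{B(y,r)}|u_n|^2\,dx\right)^{1/2}\to 0$; in effect the paper re-proves the vanishing mechanism adapted to the nonlocal term rather than citing it. Your route is shorter and cleaner granted the lemma, but since that lemma is not stated in the paper you would need either a reference for its $H^s$ version or the covering argument anyway, which is essentially what the paper writes out. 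One small correction to a side remark: $\mathcal A_1$ is not dispensable here, because your argument (like the paper's) rests on $\mathcal I_\lambda<0$, and the negativity in Proposition \ref{propc1} is proved using the scaling condition in $\mathcal A_1$; it is only the decay-of-$\mathcal D$ step that needs $\mathcal A_0$ alone.
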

The proposition claims then that the situation of vanishing does
not occurs. In the proof of Proposition \ref{novanishing},
we shall use, for all subset of $A\subset \R$, the notation
\[\mathcal{D}\vert_A(G(|u|),G(|u|)):= \int_{A\times A }G(|u(x)|)
\,V(|x-y|)\,G(|u(y)|)\,dxdy.\]
\begin{proof}
Let us first prove that $ \mathcal{D}(G(|u_n|),G(|u_n|))$ is lower
bounded. In other words, we show that  for $n\in \mathbb{N}$ large
enough there exists $\delta >0$ such that
\begin{equation}
\delta <\mathcal{D}(G(|u_n|),G(|u_n|)).\label{propD}
\end{equation}
We argue by contradiction and assume that there exist no such $\delta$, therefore $\liminf_{n\rightarrow+\infty} \mathcal{D}(G(|u_n|),G(|u_n|)) \leq 0$, thus
\begin{eqnarray*}
\mathcal{I}_\lambda= \lim_{n\rightarrow +\infty} \mathcal{E}(u_n)&=&  \lim_{n\rightarrow +\infty}\left(\frac{1}{2} \nr\nabla_s u_n\nr^2_2 -\frac{1}{2}\,\mathcal{D}(G(|u_n|),G(|u_n|))\right)\\
&\geq& - \frac12\lim_{n\rightarrow +\infty}\,\mathcal{D}(G(|u_n|),G(|u_n|))\geq 0.
\end{eqnarray*}
The inequality above is in contradiction with the fact that $\mathcal{I}_\lambda<0$. On the other hand, arguing by contradiction and assuming that the minimizing sequence $(u_n)_{n\in \mathbb{N}}$ vanishes, i.e. assume  that $\gamma=0$. Then there exists a subsequence $(u_{n_k})_{k\in \mathbb{N}}$ of $(u_n)_{n\in \mathbb{N}}$  and a radius $\tilde r>0$ such that
\[ \sup_{y\in \R}\int_{B(y,\tilde r)} \,|u_{n_k}(x)|^2 dx \stck 0.\]
Next, since the sequence $(u_{n_k})_{k\in \mathbb{N}}$ is bounded in $H^s$, then one can find $r_\epsilon>0$ such that
\[ \mathcal{D}\vert_{|x-y|\geq r_\epsilon} ({G(|u_{n_k}|),G(|u_{n_k}|))}\leq \frac\epsilon2. \]
Now, we cover $\R$ by balls of radius $r$ and centers $c_i$ for $i=1,2,\ldots$ such that each point of $\R$ is contained in at most $N+1$ ball. Therefore, there exists $N_\epsilon$ ball and a subsequence $(c_{i_l})_{l=1,\ldots,N_\epsilon}$  such that
\begin{eqnarray*}
\lefteqn{\mathcal{D}\vert_{|x-y|\geq r_\epsilon} ({G(|u_{n_k}|),G(|u_{n_k}|))} \leq \const \sum_{p,q=1}^2 {\mathcal{D}_{p,q}\vert_{|x-y|\geq r_\epsilon}} (|u_{n_k}|),}\\
&\leq&\const \sum_{p,q=1}^2  \sum_{l=1}^\infty  \sum_{i=1}^{N_\epsilon}
\int_{B_x(c_l,r)} \int_{B_y(c_{i_l},r)} \frac{|u_{n_k}(x)|^{\mu_p}|u_{n_k}(y)|^{\mu_q}}{|x-y|^{N-\beta}} dxdy,\\
&\leq&\const \sum_{p,q=1}^2  \sum_{l=1}^\infty  \sum_{i=1}^{N_\epsilon}\nr u_{n_k} \nr_{L^{2}(B_x(c_l,r))}  \nr \int_{B_y(c_{l_i},r)} \frac{|u_{n_k}(y)|^{\mu_q}}{|x-y|^{N-\beta}} dy \,|u_{n_k}|^{\mu_p-1}\nr_{L^2(B_x(c_l,r))}\,,\\
&\leq&N_\epsilon\,\const \left(\sum_{l=1}^\infty \nr u_{n_k} \nr_{L^{2}(B_x(c_l,r))}\right) \nr u_{n_k}\nr_{r} \,\nr u_{n_k}\nr^{\mu-1}_{\frac{2N}{N-2s}}\sup_{y\in \mathbb{R}^N}\, \nr u_{n_k}\nr_{L^2(B(y,r))}\\
&+& N_\epsilon\,\const \sum_{(p,q)\neq (1,2), p,q=1}^2\left(\sum_{l=1}^\infty \nr u_{n_k} \nr_{L^{2}(B_x(c_l,r))}\right)\,\nr u_{n_k}\nr^{\mu_q-1}_{{r_{pq}}} \,\nr u_{n_k}\nr^{\mu_p-1}_{{\frac {r_{pq}}{\mu_p-1}}}\times \\
&&\hskip250pt \times\sup_{y\in \mathbb{R}^N}\, \nr u_{n_k}\nr_{L^2(B(y,r))},
\end{eqnarray*}
where $r$ and $r_{pq}$ are such that
\begin{align*}
&\frac\beta N =\frac1r+(\mu-1)\left(\frac{1}{2}-\frac sN\right),\quad \frac1r+\frac12>\frac\beta N,\\&\\
& \frac{\mu_p-1}{r_{pq}} +\frac{\mu_q-1}{r_{pq}}=\frac\beta N,\quad \frac{\mu_p-1}{r_{pq}} +\frac12>\frac\beta N,\quad (p,q)\neq (1,2).
\end{align*}
Since $1+\frac{2\beta-N}{N-2s}<\mu< 1+ \frac{\beta+2s}{N}, 0<\beta<N$ and $s>\frac{N-\beta}{2}$, it is rather clear that one can find (as in section 2) $r,r_{p,q}\in \left[2,\frac{2N}{N-2s}\right]$. Consequently, we have obviously
\begin{eqnarray*}
\lefteqn{\mathcal{D}\vert_{|x-y|\geq r_\epsilon} ({G(|u_{n_k}|),G(|u_{n_k}|))}\leq (N+1)\,N_\epsilon\,\const \nr u_{n_k} \nr_{{2}}\times} \\ &&\hskip50pt \times\left(\nr u_{n_k}\nr^{\mu}_{H^s}+\nr u_{n_k}\nr^{2}_{H^s}+\nr u_{n_k}\nr^{2(\mu-1)}_{H^s}\right)\left(\sup_{y\in \mathbb{R}^N}\, \int_{B(y,r)} |u_{n_k}|^2\right)^\frac12.\\&&\\
&&\hskip300pt \stcn 0.
\end{eqnarray*}
This shows that if the minimizing sequence $(u_n)_{n\in \mathbb{N}}$ vanishes, then
\[\mathcal{D}(G(|u_n|),G(|u_n|))\stcn0.\]
This is in contradiction with the property \eqref{propD}, namely for $n\in \mathbb{N}$ large enough there exists $\gamma>0$ such that $\mathcal{D}(G(|u_n|),G(|u_n|))>\gamma$. Thus, vanishing does not occurs.
\end{proof}
Now, we show the following
\begin{proposition}\label{cont-sub}
Let $0<\pi<\lambda$ and $G$ such that $\mathcal A_0$ and $\mathcal A_1$ hold true. Then the mapping  $\lambda\mapsto \mathcal{I}_\lambda$ is continuous and $ \mathcal{I}_\lambda<\mathcal I_\pi +\mathcal I_{\lambda-\pi}$.
\end{proposition}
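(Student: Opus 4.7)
The plan is to deduce both assertions from a single scaling inequality that combines the super-linear lower bound encoded in the second part of $\mathcal{A}_1$ with the strict negativity $\mathcal{I}_\lambda<0$ obtained in Proposition~\ref{propc1}. The key estimate I would prove is
$$
\mathcal{I}_{\theta\lambda}\ \leq\ \theta^{1+\frac{2s+\beta}{2N}}\,\mathcal{I}_\lambda\qquad\text{for every }\theta\geq 1,
$$
which in particular forces $\mathcal{I}_{\theta\lambda}<\theta\,\mathcal{I}_\lambda$ strictly whenever $\theta>1$, because $1+\frac{2s+\beta}{2N}>1$ and $\mathcal{I}_\lambda<0$.

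To establish this inequality I would start from $u\in H^s$ with $\|u\|_2^2=\lambda$ and $\mathcal{E}(u)\leq \mathcal{I}_\lambda+\epsilon<0$ for small $\epsilon\in(0,-\mathcal{I}_\lambda)$, and set $v=\sqrt\theta\,u$, so that $\|v\|_2^2=\theta\lambda$ and $\|\nabla_s v\|_2^2=\theta\|\nabla_s u\|_2^2$. Applying $\mathcal{A}_1$ pointwise with scale $\sqrt\theta\geq 1$ gives $G(\sqrt\theta\,|u|)\geq \theta^{\frac12+\frac{2s+\beta}{4N}}\,G(|u|)$, which when inserted into the bilinear $\mathcal{D}$ yields $\mathcal{D}(G(|v|),G(|v|))\geq \theta^{1+\frac{2s+\beta}{2N}}\mathcal{D}(G(|u|),G(|u|))$, and therefore
$$
\mathcal{E}(v)\ \leq\ \theta\,\mathcal{E}(u)-\tfrac12\bigl(\theta^{1+\frac{2s+\beta}{2N}}-\theta\bigr)\mathcal{D}(G(|u|),G(|u|)).
$$
Since $\mathcal{E}(u)<0$ and the kinetic energy is nonnegative, $\mathcal{D}(G(|u|),G(|u|))\geq -2\mathcal{E}(u)$, and substituting plus letting $\epsilon\to 0$ produces the claimed bound. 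Once this is in hand, strict subadditivity follows immediately: for $0<\pi<\lambda$, applying the inequality with mass $\pi$ and ratio $\lambda/\pi>1$ gives $\frac{\pi}{\lambda}\mathcal{I}_\lambda<\mathcal{I}_\pi$, and with mass $\lambda-\pi$ and ratio $\lambda/(\lambda-\pi)>1$ gives $\frac{\lambda-\pi}{\lambda}\mathcal{I}_\lambda<\mathcal{I}_{\lambda-\pi}$; adding the two inequalities produces $\mathcal{I}_\lambda<\mathcal{I}_\pi+\mathcal{I}_{\lambda-\pi}$.

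For the continuity of $\lambda\mapsto\mathcal{I}_\lambda$ I would use the same rescaling in both directions. The bound $\limsup_{\lambda'\to\lambda}\mathcal{I}_{\lambda'}\leq\mathcal{I}_\lambda$ comes from testing $\mathcal{I}_{\lambda'}$ against $\sqrt{\lambda'/\lambda}\,u$ for a \emph{fixed} near-minimizer $u$ of $\mathcal{I}_\lambda$, with the growth bounds in $\mathcal{A}_0$, a Hardy--Littlewood--Sobolev estimate as in the proof of Proposition~\ref{ws1}, and dominated convergence giving $\mathcal{E}(\sqrt{\lambda'/\lambda}\,u)\to\mathcal{E}(u)$. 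The reverse inequality is obtained by picking near-minimizers $v_n$ of $\mathcal{I}_{\lambda_n}$, which are bounded in $H^s$ by Proposition~\ref{propc1}, and rescaling them back to mass $\lambda$ as $w_n=\sqrt{\lambda/\lambda_n}\,v_n$; the error $\mathcal{E}(w_n)-\mathcal{E}(v_n)\to 0$ follows from the Lipschitz-type bound $|G(\sigma\psi)-G(\psi)|\leq \eta|\sigma-1|(|\psi|^2+|\psi|^\mu)$ coming from $\mathcal{A}_0$ combined with an HLS estimate applied to the nonlinear term.

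The main obstacle is securing the \emph{strict} inequality in the key scaling bound: a mere $\mathcal{I}_{\theta\lambda}\leq\theta\,\mathcal{I}_\lambda$ would only yield non-strict subadditivity and would fail to exclude dichotomy in the forthcoming concentration-compactness step. It is precisely the superlinear exponent $1+\tfrac{2s+\beta}{2N}>1$ built into $\mathcal{A}_1$, together with $\mathcal{I}_\lambda<0$, that converts the soft inequality into a strict one.
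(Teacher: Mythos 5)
Your proposal is correct, and it splits naturally into a half that mirrors the paper and a half that genuinely deviates. The continuity argument is essentially the paper's own: test $\mathcal{I}_{\lambda'}$ with the mass-rescaled near-minimizer $\sqrt{\lambda'/\lambda}\,u$ in both directions and control the error in the nonlocal term; the paper does this through the $C^1$ property of $\mathcal{D}$ and the bound on $\mathcal{D}'$ (its inequality \eqref{derivativeD}), while you use a mean-value/Lipschitz estimate $|G(\sigma\psi)-G(\psi)|\le\eta|\sigma-1|(|\psi|^2+|\psi|^{\mu})$ from $\mathcal{A}_0$ plus Hardy--Littlewood--Sobolev — equivalent in substance. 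For the strict subadditivity, however, the routes differ. The paper uses a combined amplitude--dilation rescaling $u_\theta=\theta^{\kappa}u(\theta^{\kappa/N}\cdot)$ under which, by $\mathcal{A}_1$, the kinetic and potential terms both scale by at least $\theta^{\kappa(1+\frac{2s}{N})}$, giving $\mathcal{I}_{\theta\lambda}\le\theta^{e}\mathcal{I}_\lambda$ with $e>1$ for all $\theta>0$, and then concludes from the superadditivity of $t\mapsto t^{e}$ together with $\mathcal{I}_1<0$ via $\mathcal{I}_\lambda\le\lambda^{e}\mathcal{I}_1<\pi^{e}\mathcal{I}_1+(\lambda-\pi)^{e}\mathcal{I}_1\le\mathcal{I}_\pi+\mathcal{I}_{\lambda-\pi}$. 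You instead use the amplitude-only rescaling $v=\sqrt{\theta}\,u$, under which the kinetic term scales merely linearly, and you compensate with the structural inequality $\mathcal{D}(G(|u|),G(|u|))\ge-2\mathcal{E}(u)$ and the strict negativity $\mathcal{I}_\lambda<0$ from Proposition \ref{propc1} to obtain $\mathcal{I}_{\theta\lambda}\le\theta^{1+\frac{2s+\beta}{2N}}\mathcal{I}_\lambda<\theta\,\mathcal{I}_\lambda$ for $\theta>1$, and then add the two estimates at masses $\pi$ and $\lambda-\pi$. Both proofs rest on the same two ingredients ($\mathcal{A}_1$'s superlinear scaling of $G$ and $\mathcal{I}<0$), but your version buys two small advantages: it invokes $G(\theta\psi)\ge\theta^{1+\frac{2s+\beta}{2N}}G(\psi)$ only for amplification factors $\ge1$, which is the only regime where that condition can hold for non-homogeneous $G$, and it keeps the mass constraint exact, whereas the paper's $u_\theta$ actually has $\|u_\theta\|_2^2=\theta^{\kappa}\lambda$ rather than $\theta\lambda$ unless $\kappa=1$ (a glitch your scaling sidesteps). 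The price is the extra appeal to the sign structure of the energy through $\mathcal{D}\ge-2\mathcal{E}$, which the paper's homogeneous scaling renders unnecessary.
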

\begin{proof}
Let $\lambda >0$ and $(\lambda_k)_{k\in \mathbb{N}}$ be a sequence of positive numbers such that $\lambda_k\stck \lambda$. Let $\epsilon>0$ and $u\in \s$ such that $\nr u\nr_2=\sqrt\lambda$ and
\[\mathcal{I}_\lambda\leq \mathcal{E}(u)\leq \mathcal{I}_\lambda+\frac\epsilon2.\]
For all $k\in\mathbb{N}$, let $u_k=\sqrt \frac{\lambda_k}{\lambda} u$. Obviously $u_k\in\s$ and $\nr u_k\nr^2_2=\lambda_k$ so that for all $k\in\mathbb{N},\, \mathcal I_{\lambda_k}\leq \mathcal E(u_k)$. Now, we show that $ \mathcal E(u_k)\stck \mathcal E(u)$. First, for all $k\in\mathbb{N}$
\[\nr u_k-u\nr_{\dot H^s} \leq \nr u_k\nr_{\dot H^s}\,\left|1-\sqrt\frac{\lambda_k}{\lambda}\right|.\]
Since any sequence of $\mathcal{I}_\lambda$ is bounded in $\s$ and $\lambda_k\stck\lambda$, then we have obviously $\frac{1}{2} \nr\nabla_s u_k\nr^2_2\stck \frac{1}{2} \nr\nabla_s u\nr^2_2$. Next, following the first assertion of Proposition \ref{propc1}, we have $\mathcal{E}(u) \in C^1(\s,\mathbb{R})$. In particular, one can easily see from the proof of this point that $D(u):=\mathcal{D}(G(|u|),G(|u|))\in C^1(\s,\mathbb{R})$ and
\begin{equation}\left|\mathcal{D}'(u)\right| \leq \const \left(\nr u\nr_{H^s} +\nr u\nr^{\frac{2s+\beta}{N}}_{H^s}\right).\label{derivativeD}\end{equation}
We refer to Ref. \cite{Hichem} for details. Therefore, we have
\begin{eqnarray*}
\left|\mathcal{D}(u_k)-\mathcal{D}(u)\right| &=&\left|\int_0^t
 \frac{d}{dt}\mathcal{D}(tu_k+(1-t)u)dt\right|, \\&\leq& \const
  \sup_{u\in{H^s},\nr u\nr_{H^s}\leq \const} \nr \mathcal{D}'(u)
  \nr_{H^{-s}}\,\nr u_k-u\nr_{H^s},\\
&\leq& \const\,\nr u_k\nr_{H^s}\,\left|1-\sqrt\frac{\lambda_k}{\lambda}\right|\,
\stck0.
\end{eqnarray*}
Thus, we have $\mathcal{E}(u_k)\stck \mathcal E(u)$. Consequently,
we have $\mathcal I_{\lambda_k} \leq \mathcal I_\lambda +\epsilon$
for $k$ large
 enough. Next, for all $k\in\mathbb{N}$, let us choose
 $\tilde{u}_k \in \s$ such that $\nr\tilde{u}_k\nr_2=\sqrt \lambda_k$
 and $\mathcal E(\tilde{u}_k) \leq \mathcal I_{\lambda_k} +\frac1k$.
  Moreover, for all $k\in\mathbb{N}$, we set $\bar u_{k} =
  \sqrt\frac{\lambda}{\lambda_k}\tilde{u}_k$. Obviously,
  since $\bar u_k\in \s$ and $\nr \bar u_k\nr_2^2=\lambda$,
  we have $\mathcal I_{\lambda}\leq \mathcal{E}(\bar{u}_k)$.
   Exactly the same argument as above shows that $\mathcal{E}(\tilde u_k)\stck
    \mathcal
    E(\bar u)$ so that for $k$ large enough, we have $\mathcal
    I_\lambda\leq \mathcal I_{\lambda_k}+\epsilon$.
    Whence, $\lambda\mapsto\mathcal I_\lambda$ is continuous on
    $\mathbb{R}_+^\star$. Eventually, using the energy estimates
    (\ref{estimatea}-\ref{estimatec}) or \eqref{estimate0},
    it is rather easy to show that $ I_\lambda \xrightarrow[\lambda\to 0^+]{} 0$.
    This shows that the mapping $\lambda\mapsto \mathcal{I}_\lambda$ is continuous.

Let us now prove the strict sub--additivity inequality. For that
purpose, we introduce  $u_\theta=\theta^{\kappa}
u(\theta^{\frac\kappa N})$ for all $\kappa> \frac{N}{N+2s}$.
Obviously $u_\kappa\in\s$ and $ \nr
u_\theta\nr_{\0}=\sqrt{\theta\lambda}$. Moreover, using
$\mathcal{A}_1$, we have
\begin{eqnarray*}
\mathcal E(u_\theta) &=&\frac12\int_{\R}|(-\Delta)^{\frac s2} u_\theta|^2dx - \frac12\mathcal{D}(G(|u_\theta|),G(|u_\theta|)), \\
 &\leq& \frac{\theta^{\kappa\left(1+\,\frac{2s}{N} \right)}}{2}\left(\int_{\R}|(-\Delta)^{\frac s2} u|^2dx- \mathcal{D}(G(|u|),G(|u|))\right) = {\theta^{\kappa\left(1+\,\frac{2s}{N} \right)}}\mathcal E(u).
\end{eqnarray*}
Thus, we deduce that $\mathcal{I}_{\theta\lambda} \leq {\theta^{\kappa\left(1+\,\frac{2s}{N} \right)}}\mathcal I_\lambda$ for all $\theta>0$. Now we  let $0<\pi<\lambda$, therefore since $\kappa\left(1+\,\frac{2s}{N} \right)>1$ we have
\begin{eqnarray*}
\mathcal{I}_\lambda \leq \lambda^{\kappa\left(1+\,\frac{2s}{N} \right)}\mathcal I_1&<& \pi^{\kappa\left(1+\,\frac{2s}{N} \right)} \mathcal I_1 +(\lambda-\pi)^{\kappa\left(1+\,\frac{2s}{N} \right)}\mathcal I_1,\\
 &\leq&   \pi^{\kappa\left(1+\,\frac{2s}{N} \right)}  \pi^{-\kappa\left(1+\,\frac{2s}{N} \right)}\mathcal I_\pi+(\lambda-\pi)^{\kappa\left(1+\,\frac{2s}{N} \right)}(\lambda-\pi)^{-\kappa\left(1+\,\frac{2s}{N} \right)}\mathcal I_{\lambda-\pi},\\
 &=& \mathcal I_\pi +\mathcal I_{\lambda-\pi}.
\end{eqnarray*}
In summary, for all  $0<\pi<\lambda$, we have $ \mathcal{I}_\lambda<\mathcal I_\pi+\mathcal I_{\lambda-\pi}$.
\end{proof}
Now, we are able to claim the following
\begin{proposition}\label{nodichotomy}
Let $\lambda>0$ and $(u_n)_{n\in \mathbb{N}}$ be a minimizing sequence of  problem $\mathcal I_\lambda$ with $G$ such that $\mathcal A_0$ and $\mathcal A_1$ hold true. Then dichotomy does not occur for $(u_n)_{n\in \mathbb{N}}$.
\end{proposition}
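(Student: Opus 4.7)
My plan is to argue by contradiction via the classical dichotomy alternative of the concentration-compactness method: assuming $0 < \gamma < \lambda$, I will produce two sequences with disjoint supports at infinite distance which together control the minimizing sequence, and then appeal to the strict sub-additivity inequality proved in Proposition \ref{cont-sub} to reach a contradiction.

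First, assuming dichotomy, Lions' lemma applied to the mass densities $\rho_n = |u_n|^2$ yields, after a subsequence, points $y_n \in \R$, radii $R_n \to \infty$, and smooth radial cut-off functions $\chi, \widetilde{\chi}$ (with $\chi \equiv 1$ on $B(0,1)$, $\text{supp}\,\chi \subset B(0,2)$, $\widetilde{\chi} \equiv 1$ outside $B(0,2)$, $\widetilde{\chi} \equiv 0$ on $B(0,1)$) such that, setting $u_n^{(1)}(x) = \chi((x-y_n)/R)u_n(x)$ and $u_n^{(2)}(x) = \widetilde{\chi}((x-y_n)/R_n)u_n(x)$ with $R$ fixed large,
\[
\|u_n^{(1)}\|_2^2 \to \gamma_1, \quad \|u_n^{(2)}\|_2^2 \to \gamma_2, \quad \|u_n - u_n^{(1)} - u_n^{(2)}\|_2 \to 0,
\]
with $\gamma_1, \gamma_2 > 0$ and $\gamma_1 + \gamma_2 = \lambda$ (after adjusting $R, R_n$). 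The supports of $u_n^{(1)}$ and $u_n^{(2)}$ are at distance at least $R_n - 2R \to \infty$.

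Next, I would decouple the energy $\mathcal{E}(u_n)$. For the kinetic part, since the fractional norm $\|\nabla_s \cdot\|_2^2$ is given by the singular integral \eqref{deffrac}, a direct computation using the pointwise identity $|a+b|^2 = |a|^2 + |b|^2 + 2\text{Re}(a\bar b)$ on $u_n^{(1)}(x) - u_n^{(1)}(y)$ and its counterpart for $u_n^{(2)}$, combined with the fact that $u_n^{(1)}(x) u_n^{(2)}(x) \equiv 0$ and that the cross contributions involve pairs $(x,y)$ with either $|x-y|$ large or a factor vanishing on the support, yields
\[
\|\nabla_s u_n\|_2^2 \geq \|\nabla_s u_n^{(1)}\|_2^2 + \|\nabla_s u_n^{(2)}\|_2^2 - o(1).
\]
For the Hartree term, using the decomposition \eqref{def-decomp} and the growth assumption $\mathcal{A}_0$ (so that $G(|u_n|) \approx G(|u_n^{(1)}|) + G(|u_n^{(2)}|)$ up to an error controlled in $L^1$ via $\|u_n - u_n^{(1)} - u_n^{(2)}\|_2 \to 0$ and the uniform $H^s$ bound), the bilinear form splits into the two diagonal contributions plus cross terms of the form
\[
\int_{\R \times \R} G_i(|u_n^{(1)}(x)|)\,|x-y|^{\beta-N}\,G_j(|u_n^{(2)}(y)|)\,dxdy.
\]
Since the supports are separated by $R_n - 2R$, on the integration domain $|x-y|^{\beta-N} \leq (R_n - 2R)^{\beta-N} \to 0$ (using $\beta < N$), so these cross terms vanish in the limit. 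Consequently,
\[
\mathcal{E}(u_n) \geq \mathcal{E}(u_n^{(1)}) + \mathcal{E}(u_n^{(2)}) - o(1).
\]

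Finally, since $\|u_n^{(i)}\|_2^2 \to \gamma_i$, up to a trivial rescaling $u_n^{(i)} \mapsto \sqrt{\gamma_i}\,u_n^{(i)}/\|u_n^{(i)}\|_2$ (whose effect on the energy is $o(1)$, by continuity of $\mathcal{E}$ restricted to bounded sets of $H^s$), we obtain $\mathcal{E}(u_n^{(i)}) \geq \mathcal{I}_{\gamma_i} - o(1)$. Passing to the limit gives $\mathcal{I}_\lambda \geq \mathcal{I}_{\gamma_1} + \mathcal{I}_{\gamma_2}$, which directly contradicts the strict sub-additivity $\mathcal{I}_\lambda < \mathcal{I}_{\gamma_1} + \mathcal{I}_{\gamma_2}$ established in Proposition \ref{cont-sub}. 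The main technical obstacle will be the non-local kinetic splitting, because the double integral defining $\|\nabla_s \cdot\|_2^2$ couples far-away regions; one must carefully estimate the off-diagonal contributions using the decay of $|x-y|^{-N-2s}$ when one variable lies in $B(y_n, 2R)$ and the other outside $B(y_n, R_n)$, which is precisely where the separation $R_n - 2R \to \infty$ is used to produce the $o(1)$ error.
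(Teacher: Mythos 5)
Your overall route is the one the paper itself follows: assume dichotomy, cut $u_n$ into two pieces with far-apart supports, show the energy almost decouples, renormalize the masses of the two pieces, and contradict the strict sub-additivity $\mathcal I_\lambda<\mathcal I_\pi+\mathcal I_{\lambda-\pi}$ of Proposition \ref{cont-sub}. Your treatment of the Hartree term (cross terms killed by $|x-y|^{\beta-N}\le d_n^{\beta-N}\to0$ across the separation, the rest controlled through the vanishing of $u_n$ on the transition annulus together with the estimates \eqref{estimatea}--\eqref{estimatec}) and your renormalization step (justified by the $C^1$ bound of Proposition \ref{propc1}) are in substance the paper's.

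The gap is in the kinetic splitting. The computation you describe --- expand $|a+b|^2$ and use disjointness of the supports plus the decay of $|x-y|^{-N-2s}$ to kill the cross term --- only gives
\begin{equation*}
\|\nabla_s (u_n^{(1)}+u_n^{(2)})\|_2^2 \;=\; \|\nabla_s u_n^{(1)}\|_2^2 + \|\nabla_s u_n^{(2)}\|_2^2 + o(1),
\end{equation*}
and you still must compare $\|\nabla_s u_n\|_2^2$ with $\|\nabla_s (u_n^{(1)}+u_n^{(2)})\|_2^2$. The remainder $u_n-u_n^{(1)}-u_n^{(2)}$ lives on the annulus and the dichotomy construction only makes it small in $L^p$, $2\le p<\frac{2N}{N-2s}$; it is not small in $\dot H^s$ (the kinetic density of $u_n$ may concentrate exactly there), and in any case the inequality you need, $\|\nabla_s u_n\|_2^2 \ge \|\nabla_s u_n^{(1)}\|_2^2+\|\nabla_s u_n^{(2)}\|_2^2-o(1)$, is an upper bound on truncations by the full function and does not follow from a triangle-inequality argument. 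What closes this step --- and what the paper does --- is a commutator (IMS-type) estimate: write $u_n^{(1)}(x)-u_n^{(1)}(y)=\xi(x)\,(u_n(x)-u_n(y))+(\xi(x)-\xi(y))\,u_n(y)$, similarly for the outer cut-off $\chi$, use $\xi^2+\chi^2\le1$, so that the total truncation cost is bounded by terms of the form $\int_{\R\times\R}|\xi_{r}(x)-\xi_{r}(y)|^2\,|u_n(x)|^2\,|x-y|^{-N-2s}\,dxdy$, and then estimate these by splitting into $|x-y|\le r$ (Lipschitz bound on the cut-off, giving $r^{-2s}\|u_n\|_2^2$) and $|x-y|>r$ (boundedness, giving $r^{-s}\|u_n\|_2^2$), both of which can be made smaller than any prescribed $\epsilon$ by taking the radii large. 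With that replacement your argument coincides with the paper's proof; without it, the central inequality $\mathcal E(u_n)\ge\mathcal E(u_n^{(1)})+\mathcal E(u_n^{(2)})-o(1)$ is not established.
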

\begin{proof}
Let us introduce $\xi$ and $\chi$ in $C^\infty$ such that $0\leq \xi,\chi\leq 1$ and
\[
\xi(x)=\left\lbrace\begin{array}{lcl}1&\text{if}& |x|\leq1\\&&\\ 0&\text{if}& |x|\geq2\end{array}\right.,\:\chi(x)=1-\xi(x),\:\nr \nabla\xi\nr_{\infty},\nr\nabla\chi\nr_{\infty}\leq 2.
\]
For all $r>0$, let $\xi_r(\cdot)=\xi(\frac \cdot R)$ and $\chi_r(\cdot)=\chi(\frac \cdot R)$. we will show that dichotomy does not occur by contradicting the fact that for all  $0<\pi<\lambda$, we have $ \mathcal{I}_\lambda<\mathcal I_\pi +\mathcal I_{\lambda-\pi}$ proved in Proposition \ref{cont-sub}.
 Indeed, let $(u_n)_{n\in\mathbb{N}}$ be a minimizing sequence of  problem  $\mathcal{I}_\lambda$ and assume that dichotomy holds. Then, using the construction of \cite{Lions},  there exist
\begin{itemize}
\item $0<\pi<\lambda$,
\item a sequence $(y_n)_{n\in\mathbb{N}}$ of points in $\R$,
\item two increasing sequences of positive real number $(r_{1,n})_{n\in\mathbb{N}}$ and $(r_{2,n})_{n\in\mathbb{N}}$ such that
\[r_{1,n}\stcn +\infty\quad \text{and}\quad \frac{r_{2,n}}{2}-r_{1,n}\stcn +\infty,\]
\end{itemize}
such that the sequences $u_{1,n}=\xi_{r_{1,n}}(\cdot-y_n)u_n$ and $u_{2,n}=\chi_{r_{2,n}}(\cdot-y_n)u_n$ satisfy
\[
\left\lbrace\begin{array}{lll}
&u_n=u_{1,n}\: \text{on}\: B(y_n,{r_{1,n}}),\\&\\
&u_n=u_{2,n}\: \text{on}\: B^c(y_n,{r_{2,n}})=\R\setminus B(y_n,{r_{2,n}}),\\&\\
&\int_\R|u_{1,n}|^2dx\stcn\pi,\:\int_\R|u_{1,n}|^2dx\stcn\lambda-\pi,\\&\\
&\nr u_n-(u_{1,n}+u_{2,n})\nr_{p} \stcn 0,\:\text{for all}\: 2\leq p<\frac{2N}{N-2s},\\&\\
&\nr u_n\nr_{L^p(B(y_n,r_{2,n})\setminus B(y_n,r_{1,n}) )} \stcn 0,\:\text{for all}\: 2\leq p<\frac{2N}{N-2s},\\&\\
&\mathrm{dist}(\mathrm{Supp} (u_{1,n}),\mathrm{Supp} (u_{2,n}) )\stcn +\infty.
\end{array}\right.
\]
We have obviously
\begin{eqnarray*}
\mathcal E(u_n) &=& \mathcal E(u_{1,n}) +\mathcal E(u_{2,n}) +\frac12 \int_\R|(-\Delta)^\frac s2\,u_n|^2 -\frac12 \mathcal{D}(G(|u_{n}|),G(|u_{n}|))dx \\ &-&\frac12 \int_\R\left( |(-\Delta)^\frac s2\,u_{1,n}|^2+|(-\Delta)^\frac s2\,u_{2,n}|^2\right)dx \\
&+&\frac12\left(\mathcal{D}(G(|u_{1,n}|),G(|u_{1,n}|)) +\mathcal{D}(G(|u_{2,n}|),G(|u_{2,n}|))  \right).
\end{eqnarray*}
Now we show the existence of $\epsilon>0$ such that for sufficiently large radius $r_{1,n}$ and $r_{1,n}$ we have
\begin{align}
\frac12 \int_\R\left( |(-\Delta)^\frac s2\,u_n|^2-|(-\Delta)^\frac s2\,u_{1,n}|^2-|(-\Delta)^\frac s2\,u_{2,n}|^2\right)dx\geq -\const \epsilon.\label{part1estimate}
\end{align}
Firs of all, it is rather easy to show that by construction of the sequences $u_{i,n}$ for $i=1,2$, we have
\begin{align*}
& \int_\R\left( |(-\Delta)^\frac s2\,u_n|^2-|(-\Delta)^\frac s2\,u_{1,n}|^2-|(-\Delta)^\frac s2\,u_{2,n}|^2\right)dx \\
&\hskip70pt\geq -\,\int_{\R\times\R} \frac{ |\xi_{r_{1,n}}(x-y_n)-\xi_{r_{1,n}}(y-y_n)|^2|u_n(x)|^2}{|x-y|^{N+2s}} dxdy\\
& \hskip85pt-\,\int_{\R\times\R} \frac{ |\chi_{r_{2,n}}(x-y_n)-\chi_{r_{2,n}}(y-y_n)|^2|u_n(x)|^2}{|x-y|^{N+2s}} dxdy.
\end{align*}
Indeed, the estimate above is justified using the definition \eqref{deffrac} combined with the following basic fact for $u_{1,n}$
\begin{align*}
|u_{1,n}(x)-u_{1,n}(y)|^2&=|\xi_{r_{1,n}}(x-y_n)u_n(x)-\xi_{r_{1,n}}(y-y_n)u_n(y)|^2\\
&\leq\frac12 |\xi_{r_{1,n}}(x-y_n)-\xi_{r_{1,n}}(y-y_n)|^2\left(|u_{1,n}(x)|^2+|u_{1,n}(y)|^2\right)\\
&+ \frac12\left( |\xi_{r_{1,n}}(x-y_n)|^2+|\xi_{r_{1,n}}(y-y_n)|^2\right) |u_{1,n}(x)-u_{1,n}(y)|^2.
\end{align*}
and equivalently for $u_{2,n}$
\begin{align*}
|u_{2,n}(x)-u_{2,n}(y)|^2&=|\chi_{r_{2,n}}(x-y_n)u_n(x)-\chi_{r_{2,n}}(y-y_n)u_n(y)|^2\\
&\leq\frac12 |\chi_{r_{2,n}}(x-y_n)-\chi_{r_{2,n}}(y-y_n)|^2\left(|u_{2,n}(x)|^2+|u_{2,n}(y)|^2\right)\\
&+ \frac12\left( |\chi_{r_{2,n}}(x-y_n)|^2+|\chi_{r_{2,n}}(y-y_n)|^2\right) |u_{2,n}(x)-u_{2,n}(y)|^2.
\end{align*}
In order to show \eqref{part1estimate}, it suffices to show that there exist $\epsilon>0$ such that for large radius $r_{1,n}$ and $r_{2,n}$, we have
\begin{align*}
&\int_{\R\times\R} \frac{ |\xi_{r_{1,n}}(x-y_n)-\xi_{r_{1,n}}(y-y_n)|^2|u_n(x)|^2}{|x-y|^{N+2s}} dxdy\leq \eta\epsilon,\\
&\int_{\R\times\R} \frac{ |\chi_{r_{2,n}}(x-y_n)-\chi_{r_{2,n}}(y-y_n)|^2|u_n(x)|^2}{|x-y|^{N+2s}} dxdy\leq\eta\epsilon.
\end{align*}
We prove the first assertion and the second one follows equivalently. Indeed, we split the sum in two part as follows
\begin{align*}
&\int_{\R\times\R} \frac{ |\xi_{r_{1,n}}(x-y_n)-\xi_{r_{1,n}}(y-y_n)|^2|u_n(x)|^2}{|x-y|^{N+2s}}dxdy\\
&\hskip30pt =\int_{|x-y|\leq r_{1,n}} \frac{ |\xi_{r_{1,n}}(x-y_n)-\xi_{r_{1,n}}(y-y_n)|^2|u_n(x)|^2}{|x-y|^{N+2s}}dxdy\\
&\hskip30pt +\int_{|x-y|> r_{1,n}} \frac{ |\xi_{r_{1,n}}(x-y_n)-\xi_{r_{1,n}}(y-y_n)|^2|u_n(x)|^2}{|x-y|^{N+2s}}dxdy:=\mathcal{T}_1+\mathcal T_2
\end{align*}
Now, we write
\begin{align*}
\mathcal T_1&\leq {r^{-2}_{1,n}}\int_{|x-y|\leq r_{1,n}} \frac{ |u_n(x)|^2}{|x-y|^{N+2s-2}}dxdy\\
&\leq {r^{-2}_{1,n}} \,\int_{\R}|u_n(x)|^2 dx \int_{|x|\leq r_{1,n}} \frac1{|x|^{N+2s-2}}dx \leq \eta\,r^{-2s}_{1,n}\,\int_{\R}|u_n(x)|^2 dx.
\end{align*}
Moreover,
\begin{align*}
\mathcal T_2&\leq r^{-s}_{1,n}\int_{|x-y|> r_{1,n}} \frac{ |\xi_{r_{1,n}}(x-y_n)-\xi_{r_{1,n}}(y-y_n)|^2|u_n(x)|^2}{|x-y|^{N+s}}dxdy \\
&\leq \eta \,r^{-s}_{1,n}\, \int_{\R}|u_n(x)|^2 dx\int_{|x-y|>r_{1,n}} \frac{1}{|x-y|^{N+s}}dy \leq \eta \,r^{-s}_{1,n}\, \int_{\R}|u_n(x)|^2 dx.
\end{align*}
Eventually summing up $\mathcal T_1$ and $\mathcal T_2$ and use the same argument in order to handle the term
$\int_{\R\times\R} \frac{ |\chi_{r_{2,n}}(x-y_n)-\chi_{r_{2,n}}(y-y_n)|^2|u_n(x)|^2}{|x-y|^{N+2s}} dxdy$, one ends with
\begin{align*}
& \int_\R\left( |(-\Delta)^\frac s2\,u_n|^2-|(-\Delta)^\frac s2\,u_{1,n}|^2-|(-\Delta)^\frac s2\,u_{2,n}|^2\right)dx\, \\&\hskip100pt \geq -\eta \,(r^{-2s}_{1,n}+\,r^{-s}_{1,n}+r^{-2s}_{2,n}+\,r^{-s}_{2,n})\, \int_{\R}|u_n(x)|^2 dx.
\end{align*}
The estimate \eqref{part1estimate} follows for $r_{1,n}$ and $r_{2,n}$ large enough.
Next, observe that $|u_n-u_{1,n}-u_{2,n}|\leq 3\, \mathds{1}_{(B(y_n,r_{2,n})\setminus B(y_n,r_{1,n}) )} $ where $\mathds{1}_{(B(y_n,r_{2,n})\setminus B(y_n,r_{1,n}) )}$ denotes the characteristic function of $B(y_n,r_{2,n})\setminus B(y_n,r_{1,n}) $. Now, we have
\begin{eqnarray*}
\lefteqn{\left|\mathcal{D}(G(|u_n|),G(|u_n|))-
\mathcal{D}(G(|v_n|),G(|v_n|))-\mathcal{D}(G(|w_n|),G(|w_n|))\right|}\\&&\\
&\leq & \int_{B(y_n,2r)\setminus \bar B(y_n,2r)} \left(\left|\frac{G(|u_n|)
G(|u_n|)}{|x-y|^{N-\beta}}\right| +\left|\frac{G(|v_n|)G(|v_n|)}{|x-y|^{N-\beta}}
\right| \right.\\  && \hskip200pt+\left.\left|\frac{G(|w_n|)G(|w_n|)}{|x-y|^{N-\beta}}
\right|  \right) dxdy, \\
&\leq& \const \left(\nr
u\nr^{4-\frac{N-\beta}{s}}_{L^2(B(y_n,r_{2,n}) \setminus
B(y_n,r_{1,n}) )}\,\nr u \nr^{\frac{N-\beta}{s}}_{H^s}+\nr
u\nr^{2\mu-\frac{N(\mu-1)-\beta}{s}}_{L^2(B(y_n,r_{2,n})\setminus
B(y_n,r_{1,n}) )}\,\nr u \nr^{\frac{N(\mu-1)-\beta}{s}}_{H^s}
\right)\\&+& \const \,\nr
u\nr^{\mu+2-\frac{N\mu-2\beta}{2s}}_{L^2(B(y_n,r_{2,n}) \setminus
B(y_n,r_{1,n}) )} \,\nr u\nr^\frac{N\mu-2\beta}{2s}_{H^s}  \stcn 0.
\end{eqnarray*}
where we used the estimates (\ref{estimatea}--\ref{estimatec}).
Thus, for $r_{2,n}$ and $r_{1,n}$ large enough we have
\begin{equation}
-\frac12\left(\mathcal{D}(G(|u_n|),G(|u_n|))-\mathcal{D}(G(|v_n|),G(|v_n|))-\mathcal{D}(G(|w_n|),G(|w_n|))\right) \geq -\const \epsilon \label{part2estimate}.
\end{equation}
Summing up \eqref{part1estimate} and \eqref{part2estimate}, we end
up  for large $r_{1,n}$ and $r_{2,n}$ with
\begin{equation}\mathcal E(u_n) - \mathcal E(u_{1,n}) -\mathcal
E(u_{2,n})\geq -\const \epsilon.
\label{energyestimate}\end{equation} Since we have
$\int_\R|u_{1,n}|^2dx\stcn\pi$ and
$\int_\R|u_{1,n}|^2dx\stcn\lambda-\pi$, there exist two positive
real sequences $(\mu_{1,n})_{n\in\mathbb{N}}$ and
$(\mu_{2,n})_{n\in\mathbb{N}}$ such that $|\mu_{1,n}-1|,
|\mu_{2,n}-1|<\epsilon$ and
\[\int_\R |\mu_{1,n}u_{1,n}|^2 dx =\pi,\quad  \int_\R |\mu_{2,n}u_{2,n}|^2
dx = \lambda-\pi,\]
so that
\begin{align*} &\mathcal I_{\pi} \leq \mathcal E(\mu_{1,n}u_{1,n})\leq
\mathcal E(u_{1,n})+\frac{\const \epsilon}{2},\\
& \mathcal I_{\lambda-\pi} \leq \mathcal E(\mu_{2,n}u_{2,n}) \leq
 \mathcal E(u_{2,n}) +\frac{\const \epsilon}{2}.
\end{align*}
Thus, with \eqref{energyestimate}, we have and the continuity of the
mapping $\lambda\mapsto \mathcal I_\lambda$ for all $\lambda >0$, we
have
\[ \mathcal I_{\pi}+ \mathcal I_{\lambda-\pi}-3\const \epsilon
\leq\mathcal E(u_{1,n})+\mathcal E(u_{2,n}) -\const \epsilon \leq
\mathcal{E}(u_n)\stcn \mathcal I_\lambda.\] In summary, we proved
that for all $0<\pi<\lambda$, we have $\mathcal I_{\pi}+ \mathcal
I_{\lambda-\pi}\leq \mathcal I_{\lambda}$ contradicting the strict
sub--additivity inequality proved above. Then, the dichotomy does
not occur.
\end{proof}
Now, we finish the proof of Theorem \ref{thm2}. Since vanishing and dichotomy do not occur for any minimizing sequence $(u_n)_{n\in\mathbb{N}}$ for the problem $\mathcal I_\lambda$, then the compactness certainly occurs. Following the concentration-compactness principle \cite{Lions}, we know that every minimizing sequence $(u_n)_{n\in\mathbb{N}}$ of $\mathcal I_\lambda$ satisfies (up to extraction if necessary)
\[\lim_{r\to+\infty} \lim_{n\to+\infty}\sup_{y\in\R}\int_{B(y,r)}
 |u_n(x)|^2dx=\lambda. \]
That is, for all $\epsilon>0$, there exist $r_\epsilon>0$ and
$n_\epsilon\in\mathbb{N}^\star$ and $\{y_n\}\subset \R$ such that
for all $r>r_\epsilon$ and $n\geq n_\epsilon$, we have
\[\int_{B(y_n,r)} |u_n(x)|^2dx=\lambda-\epsilon\]
Now, let $w_n=u_n(x+y_n)$, we have obviously that $\nr w_n\nr_{H^s}=\nr u_n\nr_{H^s}$
is bounded in $\s$, therefore $(w_n)_{n\in\mathbb{N}}$ (up to extraction if necessary)
converges weakly to $w$ in $\s$. In particular $(w_n)_{n\in\mathbb{N}}$
 converges weakly to $w$ in $\0$ and $\nr w_n\nr_{2}=\sqrt\lambda$.
 Now, let $\tilde r_\epsilon>r_\epsilon$ such that $\nr
 w\nr_{L^2(B^c(0,\tilde r_\epsilon))}<\frac\epsilon2$. Thus,
 there exists $\tilde n_\epsilon\in \mathbb{N}^\star,\:\tilde n_\epsilon >n_\epsilon$
  such that for all $n\geq \tilde n_\epsilon$, we have
  $\nr w_n-w\nr_{L^2(B(0,\tilde r_\epsilon))}<\frac\epsilon 2$.
  Therefore, with  the triangle inequality, we have
\begin{eqnarray*}
\nr w\nr_{2} &\geq& \nr u_n\nr_{2} - \nr w_n-w\nr _{L^2(B(0,\tilde
r_\epsilon))} - \nr w_n-w\nr _{L^2(B^c(0,\tilde r_\epsilon))},\\
&\geq& \nr u_n\nr_{L^2(B(y_n,\tilde r_\epsilon))}- \nr
w_n-w\nr _{L^2(B(0,\tilde r_\epsilon))} - \nr w\nr _{L^2(B^c(0,\tilde r_\epsilon))}
 \geq \sqrt{\lambda-\epsilon}-\epsilon.
\end{eqnarray*}
Passing to the limit we get $\nr w\nr_{2}\geq \sqrt\lambda$.
Since the $L^2$ is lower semi continuous, we obtain that
$\nr w\nr_{2}\leq \liminf_{n\to+\infty}\nr w_n\nr_{2}=\sqrt\lambda$.
 Eventually, we get $\nr w\nr_{2}=\sqrt\lambda$,
 therefore the sequence $(w_n)_{n\in\mathbb{N}}$ converges strongly in $\0$
  to $w$.
\vskip6pt
Also, we have
\begin{eqnarray*}
\lefteqn{\left|\mathcal D(G(|w_n|),G(|w_n|))-D(G(|w|),G(|w|))\right|}\\
 && \hskip70pt\leq\left|\int_0^t \frac{d}{dt}\mathcal D(tG(|w_n|)
 +(1-t)G(|w|))\,dt\right|,\\
 &&\\
&&\hskip70pt\leq\const \sup_{u\in{H^s},\nr u\nr_{H^s}\leq \const}
\nr \mathcal{D}'(u)\nr_{H^{-s}}\,\nr w_n-w\nr_{H^s},\\
&&\\
&&\hskip70pt \leq\const\,\nr w_n-w\nr_{2} +
 \const\nr w_n-w\nr_{\frac{2s+\beta}{N}}\stcn 0.
\end{eqnarray*}
In the last line we used \ref{derivativeD}-kind inequality and again
we refer to \cite{Hichem} for a proof. Using the lower semi-continuity
 of the $-s$ norm, we have $\nr w\nr_{H^s}\leq \liminf_{n\to+\infty}
 \nr w_n\nr_{H^s}$. Summing up, we get clearly
\[\mathcal I_\lambda\leq \mathcal{E}(w)\leq \liminf_{n\to+\infty}\mathcal
 E(w_n)=\mathcal I_\lambda.\]
This shows that $w$ is a minimizer of $\mathcal I_\lambda$ and $w_n\stcn w$
in $\s$. Theorem \ref{thm1} is now proved.
\section{Stability of standing waves}
In this section, we prove the orbital stability of standing waves in the sense
 of Definition \ref{defstab}. That is we prove Theorem \ref{thm3}.

\vskip6pt
We argue par contradiction. Assume that $\hat{\mathcal O}_\lambda$ is not stable,
 then either $\hat{\mathcal O}_\lambda$ is empty or there exist $w \in
 \hat{\mathcal O}_\lambda$ and a sequence $\phi^n_0 \in
H^s$ such that $\nr\phi^n_0 -w\nr_{H^s} \stcn 0$ as $n \rightarrow\infty$ but
\begin{equation}\label{tocontradict}\displaystyle{\inf_{z \in
\hat{\mathcal O}_\lambda}}\nr\phi^n(t_n,.)-z\nr_{H^s} \geq \varepsilon,
\end{equation}
for some sequence $t_n \subset \mathbb{R}$, where $\phi^n(t_n,.)$ is the solution
of the Cauchy problem $\mathscr S$ corresponding to the initial condition $\phi^n_0$.
\vskip6pt
Now let $w_n = \phi^n(t_n,.)$, since ${\mathcal J}(w) = \hat{\mathcal I}_\lambda$,
it follows from the continuity of the $L^2$ norm and $\mathcal J$ in $H^s$ that
$\nr\phi^n_0\nr_2\stcn \sqrt\lambda$ and $\mathcal{J}(w_n) = \mathcal{J}(\phi^n_0)
 =\hat{\mathcal I}_\lambda$. With the conservation of mass and energy associated
 with the dynamics of the system $\mathscr S$, we deduce that
\begin{align*}
&\nr w_n\nr_2=\nr\phi^n_0\nr_2 \stcn\sqrt\lambda\quad\text{and}\quad \mathcal J(w_n)=\mathcal J(\phi_0^n)\stcn \hat{\mathcal I}_\lambda.
\end{align*}
Therefore if $(w_n)_{n\in\mathbb N}$ has a subsequence converging to an element $w\in H^s$: $\nr w\nr_2 = \sqrt\lambda$ and $\mathcal{J}(w) = \hat{\mathcal I}_c$. This  shows that
$w \in \hat{\mathcal O}_\lambda$, but

\[\inf_{z \in \hat{\mathcal O}_\lambda}\nr\phi^n(t_n,.) -z
 \nr_{H^s} \leq \nr w_n-w\nr_{H^s}\]
contradicting \eqref{tocontradict}.
\vskip6pt

In summary, to show the orbital stability of $\hat{\mathcal O}_\lambda$,
one has to prove that $\hat{\mathcal O}_\lambda$ is not empty and that any sequence
$(w_n)_{n\in\mathbb{N}} \subset H^s$ such that
\begin{equation}\label{eqq2}
\nr w_n\nr_2 \stcn \sqrt\lambda\quad \mbox{ and } \quad
\mathcal{J}(w_n) \stcn \hat{\mathcal I}_\lambda,
\end{equation}
is relatively compact in $H^s$ (up to a translation).

\vskip6pt From now on, we consider a sequence $(w_n)_{n\in\mathbb
N}$ satisfying \eqref{eqq2}. Our aim is to prove that it admits a
convergent subsequence to an element $w \in H^s$. \vskip6pt If
$(w_n)_{n\in \mathbb N} \subset H^s$, it is easy to see that
\[(|w_n|)_{n\in\mathbb N} \subset H^s\,;\quad w_n = (u_n,v_n).\]
Thanks to  $\mathcal{A}_0$, we have that $(w_n)_{n\in \mathbb N}$ is
bounded in $H^s$ and hence by passing to a subsequence, there exists
$w = (u,v) \in H^s$ such that
\begin{equation}\label{eqq3}
\left\{ \begin{array}{l}
u_n \mbox{ converges weakly to } u \mbox{ in }
H^s,\\ \\
v_n \mbox{ converges weakly to } v \mbox{ in }
H^s,\\\\
\mbox{ the limit when}\: n\: \mbox{goes to } +\infty \:\mbox{of}\: \nr\nabla_s u_n\nr_2 + \nr\nabla_s v_n\nr_2 \:\mbox{ exists }.
\end{array}\right.\end{equation}
Now, a straightforward calculation shows that
\begin{equation} \label{eqq4}
\mathcal{J}(w_n) - \mathcal E(|w_n|) = \frac{1}{2} \nr\nabla_s w_n\nr^2_2 - \frac{1}{2}
\nr\nabla_s|w_n|\nr^2_2\geq 0.
\end{equation}
Thus we have
\begin{equation} \label{eqq5}
\hat{\mathcal I} = \lim_{n\to+\infty} \mathcal{J}(w_n) \geq \limsup_{n\to+\infty}
\mathcal{E}(|w_n|).
\end{equation}
But
\begin{equation} \label{eqq6}\nr |w_n|\nr^2_2 = \nr w_n\nr^2_2 = \lambda_n
\stcn \lambda.
\end{equation}
By the continuity of the mapping $\lambda\mapsto \mathcal I_\lambda$ (see Proposition \ref{cont-sub}), we obtain
\begin{equation}\lim_{n\to+\infty} \mathcal{J}(w_n) \geq \liminf_{n\to+\infty} \mathcal I_{\lambda_n} = \mathcal I_\lambda \geq \hat{\mathcal{I}}_\lambda.\label{eqq7}\end{equation}
Hence
\[\lim_{n\rightarrow +\infty} \mathcal{J}(w_n) = \lim_{n\rightarrow+
\infty} \mathcal E(|w_n|) = \mathcal I_\lambda = \hat{\mathcal I}_\lambda.\]
The properties (\ref{eqq3}) and the inequalities (\ref{eqq4}) and
\eqref{eqq7} imply that
\begin{equation}\lim_{n\rightarrow +\infty} \nr\nabla_s u_n\nr^2_2
- \nr\nabla_s v_n\nr^2_2 -
\nr\nabla_s(u^2_n + v_n^2)^{1/2}\nr^2_2 = 0, \label{eqq8}\end{equation} which is
equivalent to say that
\begin{equation}
\lim_{n\rightarrow +\infty} \nr\nabla_s w_n\nr^2 = \lim_{n\rightarrow +\infty}
\nr\nabla_s |w_n|\nr^2_2. \label{eqq9}
\end{equation}
The convergence \eqref{eqq6}, the inequality \eqref{eqq7}  and Theorem
\ref{thm2} imply that $|w_n|$ is relatively compact in $H^s$ (up to a
 translation). Therefore, there exists $\varphi \in H^s$ such that
\[(u^2_n + v^2_n)^{1/2} \rightarrow \varphi \mbox{ in} \:\:H^s\:\:\mbox{ and }
\:\: \nr\varphi\nr_2 = \sqrt\lambda \:\:\mbox{with}\:\: \mathcal E(\varphi)
= I_\lambda .
\]
Let us prove that $\varphi = |w| = (u^2+v^2)^{1/2}$. Using \eqref{eqq3}, it follows that $u_n \stcn u$ and $v_n \stcn v$ in $L^2(B(0,R))$
\begin{align*}
&|(u^2_n + v^2_n)^{1/2} - (u^2 + v^2)^{1/2}| \leq |u_n-u|^2 + |v_n-v|^2,\\&\\
&(u^2_n + v^2_n)^{1/2} \stcn(u^2 + v^2)^{1/2}\quad \mbox{ in }
L^2(B(0,R)).
\end{align*}
Thus we certainly have that $(u^2+v^2)^{1/2} = |w| =\varphi$. On the other hand $\nr|w_n|\nr_2 = \nr w_n\nr_2 \stcn \sqrt\lambda = \nr w\nr_2=\nr|w|\nr_2$. Therefore, we are done if we prove that $\lim_{n\to \infty}\nr\nabla_s w_n\nr^2_2 = \nr\nabla_s w\nr^2_2$. From \eqref{eqq9}, we have that $\lim_{n\to +\infty}\nr\nabla_s w_n\nr^2_2 = \lim_{n\to +\infty} |\nabla_s |w_n|\nr^2_2$ and $\lim_{n\to +\infty} \nr\nabla_s|w_n|\nr^2_2 = \nr\nabla_s|w|\nr^2_2$. Hence by the lower semi-continuity of $\nr \nabla_s\cdot\nr_2$, we obtain
\begin{equation}\label{eqq11}\nr\nabla_s w\nr^2_2 \leq \lim_{n\to+\infty} \nr\nabla_s|w_n|\nr^2_2 = \nr\nabla_s |w|\nr^2_2.\end{equation}
Eventually, using \eqref{eqq4}, it follows that
\[\nr\nabla_s w\nr^2_2 \geq \nr\nabla_s|w|\nr^2_2.\]
Since by \eqref{eqq3}, we know that  $w_n$ converges weakly to $w$ in $H^s$, it follows that $w_n \stcn w$ in $H^s$, which completes the proof.\\
\vskip6pt
Now, we turn to the characterization of the Orbit $\hat{\mathcal O}_\lambda$.
 We show the following
\begin{proposition}
With the same assumptions of Theorem \ref{thm3}, we have
\[\hat{\mathcal O}_\lambda = \left\{e^{i\sigma}w(.+y),\quad\sigma
\in \mathbb{R}, y \in \mathbb{R}^N\right\},\]
$w$ is a minimizer of \eqref{optimization-problem}.
\end{proposition}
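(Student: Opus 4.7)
The plan is to prove the two inclusions characterizing $\hat{\mathcal O}_\lambda$ using two ingredients: the identity $\mathcal I_\lambda=\hat{\mathcal I}_\lambda$ and the equality case of the fractional diamagnetic inequality.

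First I will establish $\mathcal I_\lambda=\hat{\mathcal I}_\lambda$. The inequality $\hat{\mathcal I}_\lambda\le \mathcal I_\lambda$ is immediate, since any real competitor $u$ for $\mathcal I_\lambda$ yields $(u,0)$ for $\hat{\mathcal I}_\lambda$ with $\mathcal J(u,0)=\mathcal E(u)$. For the reverse, I will use the Gagliardo representation \eqref{deffrac} together with the pointwise inequality $\bigl||z(x)|-|z(y)|\bigr|\le |z(x)-z(y)|$, which yields the fractional diamagnetic bound $\nr\nabla_s|z|\nr_2\le \nr\nabla_s z\nr_2$. Since $\mathcal D(G(|z|),G(|z|))$ depends only on $|z|$, this gives $\mathcal E(|z|)\le\mathcal J(z)$; as $\nr|z|\nr_2^2=\nr z\nr_2^2=\lambda$, taking the infimum over admissible $z$ furnishes $\mathcal I_\lambda\le \hat{\mathcal I}_\lambda$.

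For the direction showing that every $z\in\hat{\mathcal O}_\lambda$ has the asserted form, I chain the inequalities above to obtain
\[
\mathcal I_\lambda\le \mathcal E(|z|)\le \mathcal J(z)=\hat{\mathcal I}_\lambda=\mathcal I_\lambda,
\]
forcing both steps to be equalities. The first tells me that $|z|$ is a minimizer of $\mathcal I_\lambda$, so $|z|=w(\cdot+y)$ for some minimizer $w$ furnished by Theorem~\ref{thm2} and some translation $y\in\R$. The second forces $\bigl||z(x)|-|z(y)|\bigr|=|z(x)-z(y)|$ for almost every pair $(x,y)$. Using the elementary identity $|a-b|^2-\bigl||a|-|b|\bigr|^2=2(|a||b|-\mathrm{Re}(a\bar b))$, equality for complex $a,b$ requires that $a$ and $b$ share a common argument whenever both are nonzero. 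By Fubini and the a.e.\ positivity of $|z|$, I conclude that $\arg z$ is a.e.\ constant, so $z=e^{i\sigma}w(\cdot+y)$ for some $\sigma\in\mathbb R$.

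The reverse inclusion is an immediate consequence of the symmetries of the functional: for a minimizer $w$ of $\mathcal I_\lambda$ and any $(\sigma,y)\in\mathbb R\times\R$, the function $\tilde z:=e^{i\sigma}w(\cdot+y)$ satisfies $|\tilde z|=w(\cdot+y)$, so $\mathcal D(G(|\tilde z|),G(|\tilde z|))=\mathcal D(G(w),G(w))$ by translation invariance of the convolution kernel and $\nr\nabla_s \tilde z\nr_2=\nr\nabla_s w\nr_2$ by translation and phase invariance of the Gagliardo seminorm; with $\nr\tilde z\nr_2^2=\lambda$, this gives $\mathcal J(\tilde z)=\mathcal E(w)=\mathcal I_\lambda=\hat{\mathcal I}_\lambda$, so $\tilde z\in\hat{\mathcal O}_\lambda$. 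The main obstacle I anticipate is promoting the pointwise consequence $\arg z(x)=\arg z(y)$ (a.e.\ where both values are nonzero) to a single global constant: this requires the a.e.\ positivity of $|z|$, which does not follow directly from the variational characterization and should be justified via the Euler--Lagrange equation satisfied by $|z|$ together with a strong maximum principle or unique continuation property for $(-\Delta)^s-\kappa$, a delicate point in the nonlocal setting.
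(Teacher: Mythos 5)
Your overall route coincides with the paper's: establish $\mathcal I_\lambda=\hat{\mathcal I}_\lambda$, use the equality case of the fractional diamagnetic inequality to get that $|z|$ is a minimizer of \eqref{optimization-problem} and that $z(x)$ and $z(y)$ are positively parallel for a.e.\ pair (the paper records this as $u(x)v(y)-v(x)u(y)=0$ in \eqref{eqq13}), and then conclude $z=e^{i\sigma}|z|$ provided the modulus does not vanish. However, you leave the decisive step unproven: you explicitly defer the (a.e.\ or everywhere) positivity of $|z|$ to ``a strong maximum principle or unique continuation property for $(-\Delta)^s-\kappa$'' without carrying it out. This is not a routine detail. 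A priori a minimizer's modulus could vanish on a set of positive measure, and on that set the phase of $z$ is unconstrained by the equality case, so without positivity the conclusion $z=e^{i\sigma}w(\cdot+y)$ simply does not follow. In the paper this gap is exactly what the Appendix is for: Proposition \ref{prop:reg} proves (via Hardy--Littlewood--Sobolev, Gagliardo--Nirenberg, the Tan--Xiong regularity result and Silvestre's estimates) that $\varphi=|z|$ is continuous and that the potential $V\star G(\varphi)$ is continuous and locally bounded, so the Euler--Lagrange equation can be written as $(-\Delta)^s\varphi+\kappa\varphi=c(x)\varphi$ with $c\in L^\infty_{loc}$, and only then does the Harnack inequality of Cabr\'e--Sire, combined with a chain-of-balls argument, yield $\varphi>0$ on all of $\R$. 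Your proposal names the obstacle but supplies none of this machinery, so as written the proof is incomplete at its most delicate point.

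Two smaller remarks. First, once $\mathcal E(|z|)=\mathcal I_\lambda$ you do not need to write $|z|=w(\cdot+y)$ for some translate of a minimizer: $|z|$ is itself a minimizer, and that is what the statement asserts (the set equality as stated presupposes this identification). Second, the paper does not use your Fubini argument to globalize the phase; it uses continuity of $u$ and $v$ (again from Proposition \ref{prop:reg}) and the positivity of $\varphi$ to show that the zero set of $u$ is open and closed, then splits into cases. Your Fubini route would indeed work once a.e.\ positivity is known, and is marginally more elementary at that stage, but the cost of obtaining positivity is the same in either approach, and that cost is precisely what is missing from your write-up.
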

\begin{proof}
Let $z =(u,v) \in \hat{\mathcal O}_\lambda$ and set $\varphi
= (u^2+v^2)^{1/2}$. By the previous section, we know that $\mathcal
E(\varphi) = I_\lambda$, thus $\varphi$ satisfies the partial differential equation :
\begin{equation}(-\Delta)^s \varphi  +\kappa \varphi  =  
 V\star G(|\varphi| ) G'(\varphi ), \label{eq12}
\end{equation}
where $\kappa$ is a Lagrange multiplier. Furthermore the equality
$\|\nabla_s w\|_2 = \|\nabla_s|w|\|_2$ implies that
\begin{equation} u(x)v(y) - v(x) u(y) = 0.\label{eqq13}\end{equation}
By Proposition \ref{prop:reg}, it is plain that   $\varphi \in C(\mathbb{R}^N)$ and $ V\star G(|\varphi|)\in C(\mathbb{R}^N)$.
We can write $(-\Delta)^s \varphi  +\kappa \varphi  =  
 V\star G(|\varphi|)  \frac{G'(\varphi)}{\varphi}\chi_{\{\varphi\neq 0\}} \varphi$, with $\chi_A$ being the characteristic function of the set $A$.
 Since $\varphi$ is nontrivial and $V\star G(|\varphi|)  \frac{G'(\varphi)}{\varphi}\chi_{\{\varphi\neq 0\}}\in L^\infty_{loc}(\R) $, we conclude that 
$\varphi>0$ in $\R$ by the Harnack inequality (see Lemma 4.9 in \cite{Cabre17}) and a standard argument of intersecting balls.\\
 
{\bf Case 1} : $u \equiv 0$

{\bf Case 2} : $v \equiv 0$

{\bf Case 3} : $u \neq 0 $ and $v \neq 0$ everywhere.\\

Then \eqref{eqq13} implies that
\begin{align*}
&\frac{u(x)}{v(x)} =\frac{u(y)}{v(y)}\;\quad \forall\; x,y \in \mathbb{N}^N,\\
&\Rightarrow \frac{u(x)}{v(x)} = \alpha \Rightarrow u(x) = \alpha v(x)\quad\forall\;
x \in \mathbb{R}^N,\\
&z = (\alpha + i) v\Rightarrow z = e^{i\sigma}w,w = |z|.
\end{align*}
Let us now prove \eqref{eqq13}. By the fact that $\mathcal{J}(z) =
 \hat{\mathcal I}_\lambda$, we can find a Lagrange multiplier
 $\alpha \in \mathbb{C}$ such that $\mathcal{J}'(z)(\xi)=
 \displaystyle{\frac{\alpha}{2} \int_{\mathbb{R}^N}}z \bar{\xi}
 + \xi\bar{z}$ for all $ \xi \in H^s$.
Putting $\xi = z$, it follows immediately that $\alpha \in
\mathbb{R}$ and
$$\label{eqq14}\left\lbrace
\begin{array}{l}
 \displaystyle{\int}_{\R}\nabla_s u \nabla_s f -\displaystyle{\int}_{\R\times\R}
 G(u^2+v^2)^{1/2}(y)V(|x-y|)dyG'(f(x))dx \\ \\\hskip215pt=\alpha
 \displaystyle{\int}_{\R} u(x)f(x)dx, \\ \\
\displaystyle{\int}_{\R}\nabla_s v\nabla_s f -
\displaystyle{\int}_{\R\times\R}G(u^2+v^2)^{1/2}(y)V(|x-y|)dy
G'(f(x))dx \\ \\ \hskip215pt= \alpha
\displaystyle{\int}_{\R}v(x)f(x)dx,
 \end{array}\right.
$$
$\nabla_s$ denotes the fractional gradient, for all $f \in H^s$. It
follows that $u$ and $v$ solve the following system
$$\left\{ \begin{array}{ll}
(-\Delta)^s\,u + \displaystyle{\int}G(u^2+v^2)^{1/2}(y)V(|x-y|)dy\,
G'(u(x)) + \alpha u(x) = 0,\\
(-\Delta )^s\,v + \displaystyle{\int}G(u^2+v^2)^{1/2}(y)V(|x-y|)dy\,
G'(v(x)) + \alpha v(x) = 0.
\end{array}\right.$$
By Proposition \ref{prop:reg}, we have that  $u$ and $v \in C(\mathbb{R}^N)$ because $(u^2+v^2)^{1/2}\in \s $.
 Let $\Omega = \{x \in \mathbb{R}^N : u(x) = 0\}$, obviously $\Omega$ is closed since $u$ is continuous. Let us prove that it is also open. Suppose that $x_0 \in \Omega$. Knowing that $\varphi(x_0) > 0$, we can find a ball $B$ centered in $x_0$ such that $v(x) \neq 0$ for any $x \in B$. Replacing $u$ and $v$ in \eqref{eqq8}, we certainly have that
\[u(x) v(y) - v(x) u(y) = 0\quad \forall\; x, y \in B.\]
This proves the result.
\end{proof}
\section*{Appendix}
\noindent In this appendix, we prove the following 
\begin{proposition}\label{prop:reg}
Let $s\in(0,1), N-2s\leq \beta<N, \beta>0, u,\varphi\in \s$, $G$ such that $\mathcal{A}_0$ holds and $\kappa$ is a real number  such that 
\begin{equation}(-\Delta)^s u -\kappa u = 
 [V\star G(\varphi)] G'(u).\label{theeq}
\end{equation}
Then, there exists $\alpha\in (0,1)$  depending only on $N,\kappa,s,\beta$ such that $u\in C^{0,\alpha}_{loc}(\R)$. 
Moreover, if $\varphi \in L^\infty_{loc}(\R)$, then $u\in C^{0,\alpha}_{loc}(\R)$ if $\beta\leq 1$
 and $u\in C^{1,\alpha}_{loc}(\R)$ if $\beta>1$ and in addition $V\star G(\varphi) \in C^{0,\alpha}_{loc}(\R)$.
\end{proposition}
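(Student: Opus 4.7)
Set $W(x):=V\star G(\varphi)(x)=\int_{\R}|x-y|^{\beta-N}G(\varphi(y))\,dy$, so that \eqref{theeq} reads $(-\Delta)^s u=\kappa u+W\,G'(u)$. The plan is a two–stage bootstrap: first promote the right-hand side into a suitable $L^p_{loc}$ class in order to obtain an initial Hölder estimate for $u$ via the $L^p$–Hölder theory of the fractional Laplacian, and then, under the extra hypothesis $\varphi\in L^\infty_{loc}$, upgrade the regularity of $W$ using the explicit structure of the Riesz kernel $|x|^{\beta-N}$ and bootstrap again through a Schauder-type estimate for $(-\Delta)^s$.

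\noindent\textbf{Step 1 (Local boundedness of $W$).} Using $\mathcal{A}_0$ together with $\varphi\in H^s\hookrightarrow L^2\cap L^{2N/(N-2s)}$ one has $G(\varphi)\in L^1(\R)\cap L^{q_0}(\R)$ for some $q_0>1$ large enough depending on $\mu,s,N$. Splitting the convolution on $\{|x-y|<1\}$ and $\{|x-y|\geq 1\}$ and applying Hölder's inequality against $V\in L^r_{loc}$ for every $r<N/(N-\beta)$, one concludes that $W\in L^\infty_{loc}(\R)$, with a bound uniform on compact sets. Combined with $G'(u)\leq\eta(|u|+|u|^{\mu-1})$ and $u\in H^s$, this places the right-hand side $\kappa u+W\,G'(u)$ in $L^{p}_{loc}(\R)$ for some $p>N/(2s)$ (the admissible range of $\mu$ in $\mathcal{A}_0$ is exactly what is needed to ensure $p>N/(2s)$).

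\noindent\textbf{Step 2 (Initial Hölder bound).} Apply the interior $L^p$–Hölder regularity theory for the fractional Laplacian (see e.g.\ Silvestre, or Cabré–Sire): if $(-\Delta)^s u=f$ with $f\in L^p_{loc}$ and $p>N/(2s)$, then $u\in C^{0,\alpha}_{loc}(\R)$ for some $\alpha\in(0,1)$ depending only on $N,s,p$. This proves the first assertion of the proposition.

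\noindent\textbf{Step 3 (Regularity of $W$ under $\varphi\in L^\infty_{loc}$).} Assume now $\varphi\in L^\infty_{loc}$, so that $G(\varphi)\in L^\infty_{loc}\cap L^1$. I analyse $W$ by classical Riesz-potential estimates on the kernel $|x|^{\beta-N}$: splitting $|W(x)-W(x')|$ into the contribution of $\{|x-y|+|x'-y|\leq 2|x-x'|\}$ (estimated directly using $|V|\in L^1_{loc}$) and its complement (estimated by the mean-value theorem applied to $V$, whose gradient has size $|x|^{\beta-N-1}$), one obtains: if $\beta<1$ then $W\in C^{0,\beta}_{loc}$; if $\beta=1$ then $W\in C^{0,1-\varepsilon}_{loc}$ for every $\varepsilon>0$; and if $\beta>1$ the integral defining $\nabla W=(\beta-N)|x|^{\beta-N-2}x\star G(\varphi)$ converges, and the same splitting applied to $\nabla W$ gives $W\in C^{1,\alpha}_{loc}$ for some $\alpha\in(0,1)$. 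In particular, the last statement of the proposition ($V\star G(\varphi)\in C^{0,\alpha}_{loc}$) is obtained.

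\noindent\textbf{Step 4 (Bootstrap).} Combining Step 2 with the fact that $G\in C^1$ yields $G'(u)\in C^{0,\alpha'}_{loc}$ for some $\alpha'>0$. Together with Step 3 the product $WG'(u)$, and hence the right-hand side $\kappa u+W\,G'(u)$, belongs to $C^{0,\alpha''}_{loc}$ for an $\alpha''>0$. Apply the Hölder/Schauder estimates for $(-\Delta)^s$: if $(-\Delta)^s u\in C^{0,\alpha''}_{loc}$ locally and $2s+\alpha''\notin\mathbb{Z}$, then $u$ gains $2s$ derivatives in Hölder scale. When $\beta\leq 1$ this only produces another $C^{0,\alpha}_{loc}$ bound, while when $\beta>1$ we can choose $\alpha''$ so that $2s+\alpha''>1$, which upgrades $u$ to $C^{1,\alpha}_{loc}$, completing the proof.

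\noindent\textbf{Main obstacle.} The delicate point is the matching of exponents in Step 4: after one bootstrap we must ensure that the Hölder exponent inherited by $WG'(u)$ from $W$ (which has the natural cutoff $\beta$ or $\beta-1$) combines with the $2s$ gain of $(-\Delta)^{-s}$ to cross the $C^1$ threshold when $\beta>1$, and to remain in the standard regime when $\beta\leq 1$; a careful choice of auxiliary exponents — together with excluding the critical values where $2s+\alpha''\in\mathbb{Z}$ by taking an $\varepsilon$-smaller exponent — is what produces the final $\alpha$ and completes the argument.
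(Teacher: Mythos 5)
The first assertion (H\"older continuity of $u$ under the hypotheses of the proposition alone) is where your argument has a genuine gap. In Step 1 you claim that $\kappa u + W\,G'(u)\in L^p_{loc}$ for some $p>\frac{N}{2s}$, but a priori $u$ is only in $L^2\cap L^{\frac{2N}{N-2s}}$, and $\frac{2N}{N-2s}>\frac{N}{2s}$ only when $6s>N$; thus for $N\ge 6s$ the linear term $\kappa u$ is not known to lie in any $L^p_{loc}$ with $p>\frac{N}{2s}$, and the same problem occurs for $W|u|^{\mu-1}$, since $(\mu-1)(N-2s)<4s$ can fail within the range allowed by $\mathcal{A}_0$ (e.g.\ $N=4$, $s=\frac12$, $\beta$ close to $N$, $\mu$ close to $1+\frac{2s+\beta}{N}$). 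Moreover, the claim $W\in L^\infty_{loc}$ is not available in the critical case $\beta=N-2s$: there $G(\varphi)$ is only known to lie in $L^q$ for $q\le \frac N\beta$, and the H\"older estimate of the near part of the convolution against $V\in L^{r}_{loc}$, $r<\frac{N}{N-\beta}$, fails exactly at the endpoint (the paper only obtains $V\star G(\varphi)\in L^r$ for all finite $r>\frac{N}{N-\beta}$). So the ``$f\in L^p_{loc}$, $p>\frac N{2s}$, implies H\"older'' theorem of Step 2 cannot be invoked as stated. The paper circumvents precisely this point by not treating $u$ as data: it rewrites the equation as $(-\Delta)^su=c(x)u+d(x)$ with $c=\kappa+[V\star G(\varphi)]\,\frac{G'(u)}{1+|u|}\,\mathrm{sign}(u)$ and $d=[V\star G(\varphi)]\,\frac{G'(u)}{1+|u|}$ --- the division by $1+|u|$ lowers the power of $u$ to $\mu-2<\frac{4s}{N-2s}$, so that $c,d\in L^r$ for some $r>\frac N{2s}$ --- and then applies the Tan--Xiong H\"older/Harnack result for fractional equations with lower-order terms. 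To repair your Steps 1--2 you must either adopt this device (or an equivalent De Giorgi--Nash--Moser statement for $(-\Delta)^s u=cu+d$ with $c,d\in L^r$, $r>\frac N{2s}$), or first run a Brezis--Kato type integrability bootstrap on $u$; the direct $L^p$ placement of the full right-hand side is not justified.

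By contrast, your Steps 3--4 for the second assertion are workable and take a genuinely more elementary route than the paper, which splits $V\star G(\varphi)=V\star(\eta G(\varphi))+V\star((1-\eta)G(\varphi))$, uses $(-\Delta)^{\beta/2}[V\star(\eta G(\varphi))]=\eta G(\varphi)$ together with Silvestre's estimates, and the smoothness of $\frac\beta2$-harmonic functions for the far part; your direct Riesz-potential difference estimates give the same regularity of $V\star G(\varphi)$, provided you first split off the $L^1$ tail of $G(\varphi)$, which is only locally bounded. Two caveats in Step 4: you cannot ``choose'' $\alpha''$ with $2s+\alpha''>1$, since $\alpha''$ is capped by the H\"older exponent of $u$ from the previous stage, so for small $s$ you must iterate the Schauder step finitely many times, gaining $2s$ per pass, before crossing the $C^1$ threshold (the paper instead adapts Lemma 3.3 of Fall--Felli); and you need $G'$ locally Lipschitz (which does follow from the bound on $g'(z)z$ with $\mu\ge2$) for $G'(u)$ to inherit the H\"older continuity of $u$. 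These are fixable, but the whole bootstrap rests on the initial H\"older continuity of $u$, i.e.\ on closing the gap above.
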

\begin{proof}
We start by recalling the   Gagliardo-Nirenberg  inequality 
$$
 \nr\varphi\nr_{L^p(\R)} \leq c_{N,s,p} \nr\varphi\nr_{H^s(\R)}\quad \textrm{   for all $\varphi\in \s$   },
$$ 
for $p\in \left[2, \frac{2N}{N-2s}\right]$ 
if $N>2s$ and for all $p\in \left[\left.2, \frac{2N}{N-2s}\right)\right.$ and $2s\geq N$ (here we put $ \frac{2N}{N-2s}\equiv+\infty$). Also we recall the  Hardy-Littlewood-Sobolev inequality:
$$
 \|V\star g\|_{L^{\frac{qN}{N-q\beta}}(\R)} \leq C_{N,\beta,q}  \|  g\|_{L^{q}(\R)} \quad\textrm{ for every $g\in L^{q}(\R)$,} $$ 
for $ N-q\beta>0$.
\vskip6pt
First of all we focus on the case $N>2s$. Thus, we have 
\[
\nr G(\varphi)\nr_{L^q(\R)}\leq
\nr\varphi^2\nr_{L^q(\R)}+\nr|\varphi|^\mu\nr_{L^q(\R)}=
\nr\varphi\nr_{L^{2q}(\R)}^2+\nr\varphi\nr_{L^{\mu q}(\R)}^\mu.
\]
Hence, since $\varphi\in\s$, we infer that $G(\varphi) \in L^q(\R)$ provided that $1\leq q\leq \frac{N}{N-2s}$ and $\frac2\mu\leq q\leq \frac1\mu\frac{2N}{N-2s}$, that is  $1\leq q\leq \frac{N}{N-2s}$ and $1\leq q \leq \frac{2N^2}{(N-2s)(N+2s+\beta)}$. Now, thanks to the fact that $N-2s\leq\beta<N$, we get $1<\frac N\beta\leq \frac{N}{N-2s}$ and $1<\frac N\beta \leq\frac{2N^2}{(N-2s)(N+2s+\beta)}$. In particular, we deduce that $G(\varphi) \in L^q(\R)$ for all $q\in \left[1,\frac N\beta\right]$. Now, for all $\epsilon>0$ we let $q_\epsilon =\frac N\beta-\epsilon>1$. Using  the Hardy-Littelwood-Sobolev inequality, we get $V\star G(\varphi) \in L^{\frac{N q_\epsilon}{\epsilon\beta}}(\R)$ which in turns with the fact that $\beta\geq N-2s$ shows that $V\star G(\varphi) \in L^{r}(\R)$ for all $r>\frac{N}{N-\beta}\geq \frac{N}{2s}$. Now, using the notation $b(x)=\frac{G'(u)}{1+|u|}$ and $sign(u)=\frac{u}{|u|}$, we reformulate the equation \eqref{theeq} as follows 
\begin{eqnarray*}(-\Delta)^s u(x) -\kappa u(x) &=& [V\star G(\varphi)] \,b(x)\,(1+|u|)),\\&=&\int_{\R} V(|x-y|)G(\varphi(y))dy \,b(x)\,(1+sign(u)\,u).
\end{eqnarray*}
Observing that $\mu-2<\frac{2N}{N-2s}-2=\frac{4s}{N-2s}$, then for all $r>\frac{N}{2s}$, we can write 
\begin{align*}
\nr [V\star G(\varphi)]b \nr_{L^r(\R)} &= \nr [V\star G(\varphi)]\, \frac{G'(u)}{1+|u|}\nr_{L^r(\R)}, \\
 &\leq  c\,\nr [V\star G(\varphi)]\frac{|u|+|u|^{\mu-1}}{1+|u|}\nr_{L^r(\R)}, \\
 &\leq c\,\nr V\star G(\varphi)\nr_{L^r(\R)} + c\,\nr [V\star G(\varphi)]|u|^{\mu-2}\nr_{L^r(\R)}.
\end{align*}
In order to deduce that the right hand side of this estimate is finite, we use H\"older's inequality to get
\begin{align*}
\nr [V\star G(\varphi)]\,|u|^{\mu-2}\nr^r_{L^r(\R)} &\leq\nr V\star G(\varphi)]\nr_{L^\frac{r\,\theta}{\theta-1}(\R)}\,\nr |u|\nr^{\mu-2}_{L^{r\,(\mu-2)\,\theta}(\R)},
\end{align*}
 for all $\theta>1$. Therefore, we can choose $r>\frac{N}{2s}$ and $\theta>1$ respectively close to $\frac{N}{2s}$ and $1$ so that $1<r\,(\mu-2)\,\theta<\frac{2N}{N-2s}$.
Hence  using the Gagliardo-Nirenberg inequality   and the fact that $V\star G(\varphi) \in L^{r}(\R)$ for all $r>\frac{N}{N-\beta}\geq \frac{N}{2s}$ and $u\in \s$, 
we end up with 
$ [V\star G(\varphi)]b \in L^r$ for some $r>\frac{N}{2s}$, hence $u\in C^{0,\alpha}_{loc}(\R)$.
\vskip6pt
Now, we write the equation \eqref{theeq} as follows
\begin{align*}&(-\Delta)^s u(x) = c(x) u(x)+ d(x):\\
&c(x)=\kappa+[V\star G(\varphi)]\,b(x)\,sign(u) \in L^r(\R),\\
&d(x)=[V\star G(u)]\,b(x)\in L^r(\R),
\end{align*}
for some $r>\frac{N}{2s}$. Thus, using the regularity result of Ref. \cite{TX}, 
we conclude that $u\in C^{0,\alpha}_{loc}(\R)$ for some $\alpha\in(0,1)$ provided $\frac{N}{2s}>1$.
 If $N=1$ and $s>\frac12$, then it is well-known that $\s$ is embedded in $ C^{0,\alpha}_{loc}(\R)$ with $\alpha=s-\frac12-\left[s-\frac12\right]$ 
so that $u\in C^{0,\alpha}_{loc}(\R)$. Moreover, if $N=1$ and $s=\frac12$,
 we have obviously $u\in L^p(\R)$ for every $p\geq 2$ and classical elliptic regularity yields $u\in C^{0,\alpha}_{loc}(\R)$ for some $\alpha\in(0,1)$.
\vskip6pt
 In the following, $[\cdot]$ stands for the integer part of $\cdot$.
Let us introduce a cutoff function $\eta\in C_c^{\infty}(\R)$ such that $ \eta\equiv 1$ in the closed ball $B_R$ of center $0$ and radius $R>0$
 and $\eta\equiv 0$ in $\R\setminus B_{2R}$. To alleviate the notation, we denote   $f=G(\varphi)$ which belongs to $L^\infty_{loc}(\R)\cap L^{q}(\R)$ with $1<q\leq \frac N\beta$.
We define   $V_1(\varphi):= V \star (\eta f )$ and   $V_2(\varphi):= V \star ((1-\eta) f )$. Then using Fourier transform, we get  
  $(-\Delta)^{\frac\beta2}V_1(\varphi)=f$ in the sense of distributions.
 Now, if $\frac\beta2 \in \mathbb{N}^\star$, then it is rather easy to show using classical regularity theory that $V\star G(\varphi) \in C^\beta(\R)$. 
Next, if $ 0<\frac\beta2<1$, then we apply Proposition 2.1.9 of Ref. \cite{Sil}
 to show that $V_1(\varphi) \in C^{0,\alpha}(\R)$ for $\beta\leq 1$ and $V_1(\varphi) \in C^{[\beta],\alpha}(\R)$ for $\beta>1$ and some $\alpha\in(0,1)$.
 Now, $V_2(\varphi)$ is smooth on $B_R$ since it is $\frac\beta2-$harmonic in such a ball, see Ref. \cite{BB-Schr}.
 Hence, $V\star G(\varphi) \in  C^{0,\alpha}_{loc}(\R)$ for $\beta\leq 1$ and $V\star G(\varphi) \in C^{[\beta],\alpha}_{loc}(\R)$ for $\beta>1$ and some $\alpha \in (0,1)$. Let us now turn to the case of $\frac\beta2>1$ and $\frac\beta2\not\in \mathbb{N}$. we let $\sigma=\frac\beta2-\left[\frac\beta2\right]$. Using Fourier transform, we have 
\[(-\Delta)^{\left[\frac\beta2\right]}V_1(\varphi)=(-\Delta)^{\left[\frac\beta2\right]}\left((-\Delta)^{\sigma} V_1(\varphi)\right)=\eta\,f\]
in the sense of distributions. Again, classical regularity theory arguments implies that $(-\Delta)^{\sigma} V_1(\varphi)\in C^{[\beta]}(\R)$ and so $V_1(\varphi) \in C^{[\beta]}(\R)$. Similarly, we have 
\[(-\Delta)^{\frac\beta2}V_2(\varphi)=(-\Delta)^{\sigma}\left((-\Delta)^{\left[\frac\beta2\right]} V_2(\varphi)\right)=(1-\eta)\,f\]
in the sense of distributions. Therefore the function $g:=(-\Delta)^{\left[\frac\beta2\right]}V_2(\varphi)$ is given by 
\[(-\Delta)^{\left[\frac\beta2\right]}V_2(\varphi)(x)=\int_{\R}\frac{(1-\eta(y))\,f(y)}{|x-y|^{N-\sigma}}\,dy.\]
Also, using the Hardy-Littelwood-Sobolev inequality, it is rather straightforward to see that $g\in L^p(\R)$ 
for some $p>1$. Thus, $g$ belongs to the set $\left\lbrace u,\:\int_{\R}\frac{|u(x)|}{1+|x|^{N+2\sigma}}\,dx<+\infty   \right\rbrace$.
 Again, since $g$ is $\sigma-$harmonic in $B_R$, we deduce that $g$ is smooth on $B_R$ by Ref. \cite{BB-Schr}. 
The radius $R$ being arbitrary, it follows that $V_2(\varphi)$ is smooth on $\R$. 
In particular, we have $V_2(\varphi) \in C^{[\beta]}(\R)$ because $\left[\frac\beta2\right]$ is a positive integer. 
Recalling that we showed $V_1(\varphi) \in C^{[\beta]}(\R)$, we conclude $V\star G(\varphi) \in C^{[\beta]}(\R)$.
\vskip6pt
Let us now summarize and conclude the proof. We considered the partial differential equation \eqref{theeq}
 and proved that for some $\alpha \in(0,1)$, we have  $V\star G(\varphi) \in C^{0,\alpha}_{loc}(\R)$ 
for $\beta\leq 1$ and $V\star G(\varphi) \in C^{[\beta],\alpha}_{loc}(\R)$ for $\beta>1$. Since $G'$ is
 locally Lipschitz, we deduce that $u\in C^{0,\alpha}_{loc}(\R)$ for $\beta\leq1$ and $u\in C^{1,\alpha}_{loc}(\R)$ for $\beta>1$ 
by adapting the proof of Lemma 3.3 of Ref. \cite{Fall-Felli} for $N>2s$. If $N=1$ and $2s\geq 1$, we have that $[V\star G(\varphi)]\,G'(u) \in C^{0,\gamma}_{loc}(\R)$ for some $\gamma \in (0,1)$, thus using Proposition 2.1.8 of Ref. \cite{Sil}, we get $u\in C^{1,\alpha}_{loc}(\R)$.
\end{proof}

\section*{Acknowledgments} 
Y. Cho was supported by NRF grant 2010-0007550 (Republic of Korea).
 M. M. Fall is supported by the Alexander von Humboldt foundation.

\end{document}